\newtheorem{thm}{Theorem}[section]
\newtheorem{lem}[thm]{Lemma}
\newtheorem{prop}[thm]{Proposition}
\newtheorem{cor}[thm]{Corollary}
\theoremstyle{definition}
\newtheorem{definition}[thm]{Definition}
\newcommand{\blackged}{\hfill$\blacksquare$}
\newcommand{\whiteged}{\hfill$\square$}
\newcounter{proofcount}
\renewenvironment{proof}[1][\proofname.]{\par
  \ifnum \theproofcount>0 \pushQED{\whiteged} \else \pushQED{\blackged} \fi%
  \refstepcounter{proofcount}
  \normalfont 
  \trivlist
  \item[\hskip\labelsep
        \itshape
    {\bf\em #1}]\ignorespaces
}{%
  \addtocounter{proofcount}{-1}
  \popQED\endtrivlist
}
\begin{document}

\begin{center}
\textbf{Internalizing decorated bicategories: The globularily generated condition}
\end{center}

\begin{center}
\small\textit{Juan Orendain}
\end{center}

\small {\noindent \textit{Abstract:} This is the first part of a series of papers studying the problem of existence of double categories for which horizontal bicategory and object category are given. We refer to this problem as the problem of existence of internalizations for decorated bicategories. We establish a formal framework within which the problem of existence of internalizations can be correctly formulated. Further, we introduce the condition of a double category being globularily generated. We prove that the problem of existence of internalizations for a decorated bicategory admits a solution if and only if it admits a globularily generated solution, and we prove that the condition of a double category being globularily generated is precisely the condition of a solution to the problem of existence of internalizations for a decorated bicategory being minimal. The study of the condition of a double category being globularily generated will thus be pivotal in our study of the problem of existence of internalizations.} 
\normalsize

\tableofcontents

\section{Introduction}

\noindent There exists, in the mathematical literature, a variety of competing definitions of what a higher order categorical structure should be [3],[18]. Most common amongst which are those defined by the concepts of internalization and enrichment. These two ideas reduce, in the case of categorical structures of order 2, to the concepts of double category and bicategory respectively, both types of structures introduced by Ehresmann, in [10] and [11]. These two notions are related in different ways.
Every double category admits an underlying bicategory, its horizontal bicategory [22], and every bicategory can be considered as a trivial double category. Both constructions admit first order categorical extensions. The condition of a double category being trivial can be seen as a minimizing constraint with respect to the horizontal bicategory functor. 

We are interested in the existence of operations inverse to that of horizontalization. Precisely, we are interested in the problem of existence of double categories having a given bicategory as horizontal bicategory and a given category as category of objects. We call this problem the problem of existence of internalizations for decorated bicategories. We understand the problem of existence of internalizations of decorated bicategories as the problem of coherent equivariantization of 2-cells in bicategories, with respect to given collections of vertical morphisms. Our main motivation for the study of the problem of internalization of decorated bicategories comes from the work of Bartels, Douglas, and Henriques on the theory of correspondences between von Neumann algebras [4], from their theory of coordinate free conformal nets [5],[6],[7], and from the theory of regular and extended quantum field theories [1],[2],[19],[21].

This is the first installment of a series of papers studying the problem of existence of internalizations of decorated bicategories. We introduce the concepts of decorated bicategory and decorated pseudofunctor, which we use as formal framework within which to state the problem of existence of internalizations. Further, we introduce globularily generated double categories. We prove that the problem of existence of internalizations of decorated bicategories is equivalent to the problem of existence of globularily generated internalizations, and we prove that the condition of a double category being globularily generated is precisely the condition of a solution to the problem of existence of internalizations being minimal. This will be our main motivation for the study of globularily generated double categories. We interpret the condition of a double category being globularily generated as a decorated analog of the condition of a double category being trivial. We introduce technical tools necessary to obtain results on the theory of globularily generated double categories, of which we make use in order to prove that the globularily generated condition is not trivial. We present categorifications of structures involved in the theory of globularily generated double categories and we perform relevant computations in certain examples. The condition of a double category being globularily generated will be our main object of study. We now sketch the contents of this paper.

In section 2 we recall basic concepts related to the theory of categorical structures of second order, we introduce the concepts of decorated bicategory, decorated pseudofunctor, and decorated horizontalization; we introduce both the problem of existence of internalizations of decorated bicategories and the problem of existence of internalization functors. We establish notational conventions used throughout the paper and present examples relevant to our discussion. In section 3, motivated by problems presented in section 2 we define and study the concept of globularily generated double category. We define the globularily generated piece construction, which we furnish with the structure of a 2-functor. We use this to prove that the problem of existence of internalizations of decorated bicategories is equivalent to the problem of existence of globularily generated bicategories, and that the problem of existence of internalization functors is equivalent to the problem of existence of globularily generated internalization functors. Further, we prove that globularily generated double categories are precisely solutions to problems of existence of internalizations, which are minimal. We prove that the globularily generated piece 2-functor is a strict 2-reflector which we interpret by saying that the condition of a double category being globularily generated is a decorated analog of the condition of a double category being trivial. In section 4 we introduce the technical framework needed in order to obtain results on the structure of globularily generated double categories. We introduce the vertical filtration of a globularily generated double category and the vertical length of a globularily generated 2-morphism. We use this technical framework to prove that the condition of a double category being globularily generated is not trivial. In section 5 we extend the definition of vertical filtration of a globularily generated double category introduced in section 4, to a filtration of the globularily generated piece functor. In section 5 we use results obtained in section 3 to perform computations of the globularily generated piece of double categories presented in section 2 thus providing non-trivial examples of globularily generated double categories. Precisely, we compute the globularily generated piece of double category of oriented cobordisms, double category of algebras, and of double category of semisimple von Neumann algebras.

\

\noindent \textbf{Acknowledgements:} The author would like to thank professor Yasuyuki Kawahigashi, for the trust, support, and encouragement given to the author during his stay at The University of Tokyo. The completion of this paper would have not been possible without him.

\section{Preliminaries}

\noindent In this first section we recall basic concepts and set notational conventions regarding the theory of categorical structures of second order. Further, we introduce the concepts of decorated bicategory, decorated pseudofunctor, and decorated horizontalization. We use this to formally present the problem of existence of internalizations of decorated bicategories, which will serve as motivation for the rest of the work presented in this paper. We present examples relevant to our discussion. 

\

\noindent \textit{Bicategories} 

\

\noindent We refer the reader to [8] and [14] for concepts related to the theory of bicategories. Given a bicategory $B$, we will write $B_0,B_1$, and $B_2$ for the collections of 0-,1-, and 2-cells of $B$ respectively. We will write $i$, $\circ$, and $\ast$ for the horizontal identity functions, and the vertical and horizontal compositions in $B$ respectively. We will not make explicit use of left and right identity transformations or associator of bicategories and will thus omit notational conventions for these concepts. Nevertheless, bicategories will not be assumed to be strict unless explicitly stated.

Given a pseudofunctor $F$ we will write $F_0,F_1$, and $F_2$ for the 0-,1-, and 2-cell components of $F$ respectively. Pseudofunctors will not be assumed to be strict, unless explicitly stated. We will write \textbf{bCat} for the underlying category of the 3-category of bicategories, pseudofunctors, lax or oplax natural transformations, and deformations between transformations.

\

\noindent \textit{Double categories}

\

\noindent We refer the reader to [20] and [22] for concepts related to the theory of double categories. Given a double category $C$ we will write $C_0$ and $C_1$ for the category of objects and the category of morphisms of $C$ respectively. We will denote by $s,t,i$, and $\ast$ the source, target, identity, and horizontal composition functors of a double category $C$. We will again omit notational conventions for identity transformations and associator of double categories. Double categories will not be assumed to be strict unless otherwise stated.

Given a double category $C$, a 2-morphism in $C$ is said to be \textbf{globular} if its source and target are vertical identity endomorphisms. The definition of double category that we use requires for the components of the left and right identity transformations and of the associator of a double category to be globular.

\noindent Given a double functor $F$ we will write $F_0$ and $F_1$ for the object functor and the morphism functor of $F$ respectively. Double functors will not be assumed to be strict unless otherwise stated. Given a double natural transformation $\eta$ we will write $\eta_0$ and $\eta_1$ for the object and morphism components of $\eta$ respectively. We will denote the 2-category of double categories, double functors, and double natural transformations by \textbf{dCat}.

\

\noindent \textit{Trivial double categories}

\

\noindent Given a bicategory $B$, we associate to $B$ a double category $\overline{B}$. The category of objects $\overline{B}_0$ of $\overline{B}$ will be the discrete category $dB_0$ generated by the collection of 0-cells $B_0$ of $B$. Category of morphisms $\overline{B}_1$ of $\overline{B}$ will be the pair formed by the collections of 1- and 2-cells $B_1$ and $B_2$ of $B$, with vertical composition of 2-cells as composition operation. The obvious functors serve as source, target, and identity functors for $\overline{B}$. The horizontal composition bifunctor of $\overline{B}$ will be the bifunctor generated by the horizontal composition operation in $B$. The left and right  identity transformations and the associator of $\overline{B}$ are given by the corresponding constraints in $B$. We call $\overline{B}$ the \textbf{trivial double category} associated to bicategory $B$.

Given a pseudofunctor $F$ between bicategories $B$ and $B'$, we will denote by $\overline{F}$ the pair formed by the functor induced by the 0-cell function $F_0$ of $F$, from the discrete category $dB_0$ generated by $B_0$ to the discrete category $dB_0'$ generated by $B_0'$, and the pair $F_1,F_2$ formed by the 1- and 2-cell functions of $F$. Thus defined the pair $\overline{F}$ is a double functor from the trivial double category $\overline{B}$ associated to $B$ to the trivial double category $\overline{B}'$ associated to $B'$. We call $\overline{F}$ the \textbf{trivial double functor} associated to the pseudofunctor $F$. The pair of functions formed by the function associating to every bicategory $B$ its trivial double category $\overline{B}$ and the function associating to every pseudofunctor $F$ its trivial double functor $\overline{F}$ is a functor from category \textbf{bCat} to category \textbf{dCat}. We call this functor the \textbf{trivial double category functor}. 

\

\noindent \textit{Horizontalization}

\

\noindent Given a double category $C$, we now associate to $C$ a bicategory $HC$ whose collections of 0-,1-, and 2-cells are the collection of objects of $C$, the collection of horizontal morphisms of $C$, and the collection of globular 2-morphisms of $C$ respectively. Vertical and horizontal composition of 2-cells in $HC$ is the vertical and horizontal composition of 2-cells in $C$. The left and right identity transformations and associator of $HC$ are induced by the left and right identity transformations and associator of $C$ respectively. We call $HC$ the \textbf{horizontal bicategory}, or simply the horizontalization of $C$ [20].

Given a double functor $F$ from a double category $C$ to a double category $C'$, we denote by $HF$ the triple formed by the object function of the object functor $F_0$ of $F$, the object function of the morphism functor $F_1$ of $F$, and the restriction of the morphism function of the morphism functor $F_1$ of $F$, to the collection of globular 2-morphisms of $C$. Thus defined $HF$ is a pseudofunctor from the horizontal bicategory $HC$ associated to $C$ to the horizontal bicategory $HC'$ associated to $C'$. We call $HF$ the \textbf{horizontal pseudofunctor}, or the horizontalization, of double functor $F$. The pair $H$ formed by the function associating horizontal bicategory $HC$ to every double category $C$ and the function associating horizontal pseudofunctor $HF$ to every double functor $F$, is a functor from category \textbf{dCat} to \textbf{bCat}. We call this functor the \textbf{horizontalization functor} and we denote it by $H$.
 
\

\noindent \textit{Adjoint relation}

\

\noindent Given a bicategory $B$ it is trivial to see that the equation $H\overline{B}=B$ holds. This relations extends to a functorial setting, that is, the horizontalization functor is left inverse to the trivial double category functor. The horizontalization functor is not right inverse to the trivial double category functor. A weaker relation holds nevertheless. The horizontalization functor is left adjoint to the trivial double category functor. The trivial double category functor is injective and faithful, and thus makes category \textbf{bCat} a subcategory of \textbf{dCat}. The horizontalization functor is thus a reflector for this inclusion. 

\

\noindent \textit{Decorated bicategories}

\

\noindent Given a bicategory $B$ we say that a category $C$ is a decoration of $B$ if the collection of objects of $C$ is equal to the collection of 0-cells of $B$. We will usually indicate that a category $C$ is a decoration of a bicategory $B$ by writing $C$ as $B^*$. We say that a pair $B^*,B$, formed by a category $B^*$ and a bicategory $B$, is a \textbf{decorated bicategory} if category $B^*$ is a decoration of bicategory $B$. In that case we will call $B$ the underlying bicategory of the decorated bicategory $B^*,B$.

Given decorated bicategories $B^*,B$ and $B'^*,B'$ we say that a pair $F,G$ is a \textbf{decorated pseudofunctor} from $B^*,B$ to $B'^*,B'$ if $F$ is a functor from the decoration $B^*$ of $B^*,B$ to the decoration $B'^*$ of $B'^*,B'$, $G$ is a pseudofunctor from the underlying bicategory $B$ of $B^*,B$ to the underlying bicategory $B'$ of $B'^*,B'$, and the object function of $F$ and the 0-cell function of $G$ coincide. Composition of decorated pseudofunctors is performed component-wise. This composition operation is associative and unital and thus provides the pair formed by collection of decorated bicategories and collection of decorated pseudofunctors with the structure of a category. We denote this category by \textbf{bCat}$^*$.

\

\noindent \textit{Decorated horizontalization}

\

\noindent We extend the construction of the horizontalization functor $H$ to a functor from category \textbf{dCat} to category \textbf{bCat}$^*$. Given a double category $C$, we denote by $H^*C$ the pair formed by the object category $C_0$ of $C$ and by the horizontal bicategory $HC$ of $C$. Thus defined, $H^*C$ is a decorated bicategory. Given a double functor $F$ between double categories $C$ and $C'$ we denote by $H^*F$ the pair formed by the object functor $F_0$ of $F$ and by the horizontalization $HF$ of $F$. Thus defined, pair $H^*F$ associated to the double functor $F$ is a decorated pseudofunctor from the decorated horizontalization $H^*C$ of $C$ to the decorated horizontalization $H^*C'$ of $C'$. The function associating, to every double category $C$ the decorated horizontalization $H^*C$ of $C$, together with the function associating, to every double functor $F$ the decorated horizontalization $H^*F$ of $F$, is a functor from category \textbf{dCat} to category \textbf{bCat}$^*$. We denote this functor by $H^*$ and we call it the \textbf{decorated horizontalization functor}.

\

\noindent \textit{Internalization}

\

\noindent Given a decorated bicategory $B^*,B$ we say that a double category $C$ is an \textbf{internalization} of $B^*,B$ if $B^*,B$ is equal to decorated horizontalization $H^*C$ of $C$. Moreover, we say that a functor $\Psi$ from \textbf{bCat}$^*$ to \textbf{dCat} is an \textbf{internalization functor}, if $\Psi$ is a section of decorated horizontalization functor $H^*$. We think of internalization functors as coherent ways of associating internalizations to decorated bicategories
 
\

\noindent \textit{Examples}

\

\noindent \textbf{Trivially decorated bicategories:} Let $B$ be a bicategory. In that case pair $dB_0,B$ formed by the discrete category generated by the collection of 0-cells $B_0$ of $B$, and $B$, is a decorated bicategory. We call the pair $dB_0,B$ the discretely decorated bicategory associated to bicategory $B$. The decorated horizontalization $H^*\overline{B}$ of the trivial double category $\overline{B}$ associated to bicategory $B$ is equal to the discretely decorated bicategory $dB_0,B$ associated to $B$, that is, the trivial double category $\overline{B}$ associated to bicategory $B$ is an internalization of the discretely decorated decorated bicategory $dB_0,B$ associated to $B$.

\

\noindent \textbf{Cobordisms:} Let $n$ be a positive integer. Let \textbf{Cob}$(n)_0$ denote the category whose collection of objects is the collection of closed $n$-dimensional smooth manifolds and whose collection of morphisms is the collection of diffeomorphisms between manifolds. Given closed $n$-manifolds $X,Y,Z$ and $W$, a cobordism $M$ from $X$ to $Y$, and a cobordism $N$ from $Z$ to $W$, we will say that a triple $(f,\Phi,g)$, where $f$ is a diffeomorphisms from $X$ to $Z$, $g$ is a diffeomorphism from $Y$ to $W$, and where $\Phi$ is a diffeomorphism from $M$ to $N$; is an equivariant diffeomorphism from $M$ to $N$ if the restriction of $\Phi$ to $X$ equals $f$ and the restriction of $\Phi$ to $Y$ equals $g$. Composition of equivariant morphisms between cobordisms is performed entry-wise. Let \textbf{Cob}$(n)_1$ denote the category whose collection of objects is the collection of cobordisms between closed $n$-dimensional manifolds and whose collection of morphisms is the collection of equivariant diffeomorphisms between cobordisms. Given an $n$-dimensional manifold $X$ we write $i_X$ for the cobordism $X\times [0,1]$. Given a diffeomorphism $f:X\to Y$ between closed $n$-dimensional manifolds $X$ and $Y$, we denote by $i_f$ the equivariant diffeomorphism $(f,f\times[0,1],f)$ from the cobordism $i_X$ to the cobordism $i_Y$. These two functions define a functor from \textbf{Cob}$(n)_0$ to \textbf{Cob}$(n)_1$. Denote this functor by $i$. Given compatible cobordisms $M$ and $N$ with respect to a manifold $X$ we write $N\ast M$ for the joint union $M\cup_X N$ of $M$ and $N$, with respect to $X$. Finally, given equivariant diffeomorphisms $(f,\Phi,g)$ and $(g,\Psi,h)$, compatible with respect to diffeomorphism $g$ denote by $(g,\Psi,h)\ast(f,\Phi,g)$ the joint union $(f,\Phi\cup_g\Psi,h)$ of $(f,\Phi,g)$ and $(g,\Psi,h)$ with respect to $g$. The two operations defined above form a bifunctor which we denote by $\ast$. With this structure the pair \textbf{Cob}$(n)$ is a double category, where the identity transformations and associator come from the obvious diffeomorphisms from cobordism theory. The decorated horizontalization $H^*$\textbf{Cob}$(n)$ of \textbf{Cob}$(n)$ is equal to bicategory $\underline{\mbox{\textbf{Cob}}(n)}$ of $n$-dimensional cobordisms and diffeomorphisms presented in [17] decorated by the grupoid of diffeomorphisms of closed oreiented $n$-dimensional manifolds, that is the double category \textbf{Cob}$(n)$ is an internalization of bicategory $\underline{\mbox{\textbf{Cob}}(n)}$ decorated by the grupoid of diffeomorphisms of closed oriented $n$-dimensional manifolds.

\

\noindent \textbf{Algebras:} Let \textbf{Alg}$_0$ denote the category whose collection of objects is the collection of complex algebras and whose collection of morphisms is the collection of unital algebra morphisms. Given algebras $A,B,C$ and $D$, a left-right $A$-$B$ bimodule $M$ and a left-right $C$-$D$-bimodule $N$, we say that a triple $(f,\Phi,g)$, where $f$ is a unital algebra morphism from $A$ to $C$, $g$ is a unital algebra morphism from $B$ to $D$ and $\Phi$ is a linear transformation from $M$ to $N$; is an equivariant bimodule morphism from $M$ to $N$, if for every $a\in A$, $b\in B$, and $x\in M$ equation $\Phi(axb)=f(a)\Phi(x)g(b)$ holds. Composition of equivariant bimodule morphisms is performed entry-wise. Let \textbf{Alg}$_1$ denote the category whose collection of objects is the collection of bimodules over complex algebras and whose collection of morphisms is the collection of equivariant bimodule morphisms. Denote by \textbf{Alg} the pair formed by categories \textbf{Alg}$_0$ and \textbf{Alg}$_1$. Denote by $i$ the functor from \textbf{Alg}$_0$ to \textbf{Alg}$_1$, associating $A$ considered as a left-right $A$-bimodule to every algebra $A$, and associating the equivariant bimodule morphism $(f,f,f)$ to every unital algebra morphism $f$. Given algebras $A,B$ and $C$, a left-right $A$-$B$ bimodule $M$ and a left-right $B$-$C$ bimodule $N$, write $N\ast M$ for the tensor product $M\otimes_B N$ relative to $B$. Given equivariant bimodule morphisms $(f,\Phi,g)$ and $(g,\Psi,h)$ we write $(g,\Psi,h)\ast(f,\Phi,g)$ for the triple $(f,\Phi\otimes_g\Psi,h)$. Denote by $\ast$ the bifunctor, from \textbf{Alg}$_1\times_{\mbox{\textbf{Alg}}_0}$\textbf{Alg}$_1$ to \textbf{Alg}$_1$ defined by operations $\ast$ defined above. This structure provides the pair \textbf{Alg} of categories \textbf{Alg}$_0$ and \textbf{Alg}$_1$ with the structure of a double category. Left and right identity transformations and associator in \textbf{Alg} are defined by those associated to the relative tensor product bifunctor. The decorated horizontalization $H^*$\textbf{Alg} of \textbf{Alg} is equal to the bicategory $\underline{\mbox{\textbf{Alg}}}$ of unital algebras, algebra bimodules, and bimodules morphisms presented in [17], decorated by category \textbf{Alg}$_0$ of unital algebra morphisms. That is, the double category \textbf{Alg} is an internalization of bicategory $\underline{\mbox{\textbf{Alg}}}$, decorated by the category \textbf{Alg}$_0$ of algebras, and unital algebra morphisms.  

\

\noindent \textbf{von Neumann algebras:} Let $A$ be a von Neumann algebra. We say that $A$ is semisimple if the center $A\cap A'$ of $A$ is finite dimensional. Let $A$ and $B$ be semisimple von Neumann algebras. Let $f:A\to B$ be a von Neumann algebra morphism from $A$ to $B$. Let $A=\bigoplus_{i=1}^nA_i$ and $B=\bigoplus_{j=1}^mB_j$ be direct decompositions of $A$ and $B$ such that $A_j$ is a factor for every $1\leq i\leq n$ and such that $B_j$ is a factor for every $1\leq j\leq m$. Write $f$ in matrix form as $(f_{i,j})$ where $f_{i,j}$ is a von Neumann algebra morphism from factor $A_i$ to factor $B_j$ for every $i,j$. In that case we say that the morphism $f$ is finite if morphism $f_{i,j}$ has finite index [13],[15],[16] for every $1\leq i\leq n$ and $1\leq j\leq m$. The composition of finite morphisms between semisimple von Neumann algebras is again a finite morphism. We denote by $[W^*]^{f}_0$ the category whose objects are semisimple von Neumann algebras and whose morphisms are finite morphisms. Let $A,B,C$, and $D$ be semisimple von Neumann algebras. Let $M$ be a normal $A$-$B$-bimodule and let $N$ be a normal $C$-$D$-bimodule. We say that a triple $(f,\Phi,g)$ is an equivariant finite intertwiner from $M$ to $N$ if $f$ is a finite morphism from $A$ to $C$, $\Phi$ is a bounded operator from $M$ to $N$, $g$ is a finite morphism from $B$ to $D$, and if equation $\Phi(axb)=f(a)\Phi(x)g(b)$ holds for every $a\in A$, $x\in M$ and $b\in B$. Composition of equivariant finite morphisms is performed entry-wise. Denote by $[W^*]^f_1$ the category whose objects are bimodules over semisimple von Neumann algebras and whose morphisms are bounded intertwiners. Denote by $[W^*]^f$ the pair formed by $[W^*]^f_0$ and $[W^*]^f_1$.
Let $A$ be a semisimple von Neumann algebra. Denote by $i$ the functorial extension of the Haagerup standard form construction [12] defined in [4]. Thus defined $i$ is a functor from category $[W^*]^f_0$ to category $[W^*]^f_1$. Denote now by $\ast$ the functorial extension of the Connes fusion product operation [9],[18] defined in [4]. Thus defined $\ast$ is a bifunctor from category $[W^*]_1^f\times_{[W^*]_0^f}[W^*]_1^f$ to category $[W^*]_1^f$. With this structure pair $[W^*]^f$ forms a double category. Denote by $\underline{[W^*]}^f$ the sub-bicategory of bicategory $\underline{[W^*]}$ of von Neumann algebras, normal bimodules, and bimodule intertwiners defined in [17], generated by semisimple von Neumann algebras. In that case the decorated horizontalization $H^*[W^*]^f$ of double category $[W^*]^f$ is equal to bicategory $\underline{[W^*]}^f$, decorated by category $[W^*]^f_0$ of semisimple von Neumann algebras and finite morphisms, that is, double category $[W^*]^f$ is an internalization of bicategory $\underline{[W^*]}^f$ decorated by category $[W^*]^f_0$.

\

\noindent\textit{The problem of existence of internalizations}

\

\noindent It is not known if bicategory $\underline{[W^*]}$ of general von Neumann algebras, normal bimodules, and bimodule intertwiners appearing in [17], decorated by the category of general von Neumann algebras and general von Neumann algebra morphisms, admits an internalization. The existence of extensions of functors $i$ and $\ast$ appearing in the definition of $[W^*]^f$, to the category of general von Neumann algebras and general von Neumann algebra morphisms and to the category of normal bimodules and general equivariant bimodule intertwiners would imply the existence of such an internalization. The existence of such extensions of $i$ and $\ast$ is conjectured in [4].

We are interested in the problem of finding internalizations for general decorated bicategories. We refer to this problem as the \textbf{problem of existence of internalizations for decorated bicategories}. The problem of existence of internalizations for decorated bicategories is the main motivation for the work at present. We will moreover be interested in the problem of finding internalization functors. We refer o this problem as \textbf{the problem of existence of internalization functors} and we regard it as a categorical version of the problem of existence of internalizations for decorated bicategories.

\section{Globularily generated double categories}

\noindent In this section we introduce globularily generated double categories. We prove that the condition of a double category being globularily generated is minimal with respect to having a given decorated bicategory as decorated horizontalization and that the problem of existence of internalizations for decorated bicategories is equivalent to the problem of existence of globularily generated internalizations. We do this by associating, to every double category, a globularily generated double category, its globularily generated piece. We furnish the globularily generated piece construction with the structure of a 2-functor. We extend results on the globularily generated piece construction to analogous results regarding internalization functors. Finally, we prove that globularily generated piece 2-functor is a 2-reflector. We interpret this by regarding the condition of a double category being globularily generated as a categorical analog of the condition of a double category being trivial.

\

\noindent Given a double category $C$, a \textbf{sub-double category} of $C$ is a double category satisfying the standard notion of sub-structure applied to $C$. More precisely, we will say that a double category $D$ is a sub-double category of a double category $C$ if the object and morphism categories $D_0$ and $D_1$ of $D$ are subcategories of the object and morphism categories $C_0$ and $C_1$ of $C$ respectively, if the structure functors $s,t,i$ and $\ast$ of $C$ restrict to those of $D$, and if the the left and right identity transformations and associator for $C$ restrict to the left and right identity transformations and associator for $D$ respectively. 

Given a double category $C$, we will say that a sub-double category $D$ of $C$ is \textbf{complete}, if the collections of objects, vertical morphisms, and horizontal morphisms of $D$ are equal to the collections of objects, vertical morphisms, and horizontal morphisms of $C$ respectively. 

Given a double category $C$, it is easily seen that the collection of complete sub-double categories of $C$ is closed under the operation of taking arbitrary intersections. Given a collection of 2-morphisms $X$ of a double category $C$, we will call the intersection of all complete sub-double categories $D$, of $C$, such that $X$ is contained in the collection of 2-morphisms of $D$, the \textbf{complete sub-double category} of $C$ generated by $X$. We will say that a collection of 2-morphisms $X$ of double a category $C$ generates $C$ if $C$ is equal to the complete sub-double category of $C$ generated by $X$. The following is the main definition of this section.

\begin{definition}
Let $C$ be a double category. We say that $C$ is globularily generated if $C$ is generated by its collection of globular 2-morphisms.
\end{definition}

\noindent Trivial double categories are examples of globularily generated double categories. Non-trivial examples will be provided in section 6.

\

\noindent Given a double category $C$, we write $\gamma C$ for the complete sub-double category of $C$ generated by the collection of globular 2-morphisms of $C$. Thus defined, $\gamma C$ is a globularily generated complete sub-double category of $C$. We call double category $\gamma C$ associated to $C$ the \textbf{globularily generated piece} of $C$. Thus defined, the globularily generated piece $\gamma C$ of double category $C$ is equal to both the maximal globularily generated sub-double category of $C$ and to the minimal complete sub-double category of $C$ containing the collection of globular 2-morphisms of $C$.

\begin{prop}
Let $C$ be a double category. The decorated horizontalization $H^*C$ of $C$ is equal to the decorated horizontalization $H^*\gamma C$ of the globularily generated piece $\gamma C$ of $C$. Moreover, the globularily generated piece $\gamma C$ of $C$ is minimal with respect to this property.
\end{prop}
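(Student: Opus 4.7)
My plan is to prove the equality of decorated horizontalizations by unpacking both sides against the construction of $\gamma C$, and then to deduce minimality directly from the definition of $\gamma C$ as an intersection of complete sub-double categories.

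First I would verify that $H^*\gamma C = H^*C$. Since $\gamma C$ is complete as a sub-double category of $C$, its object category agrees with $C_0$ on the nose, so the decoration on both sides coincides. For the horizontal bicategory, note that the 0-cells of $H\gamma C$ are the objects of $\gamma C$, the 1-cells are its horizontal morphisms, and the 2-cells are its globular 2-morphisms; completeness handles the first two, and the fact that $\gamma C$ is generated, as a complete sub-double category, by all globular 2-morphisms of $C$ handles the third. Horizontal and vertical composition, the identities, and the associator on globular 2-cells are simply the restrictions of the corresponding structures on $C$, which by definition of a sub-double category agree. A point that deserves care here is that the identity transformations and the associator of $HC$ are built from the corresponding 2-morphism components of $C$; these components are globular by the convention recorded in the preliminaries, hence they automatically belong to $\gamma C$, and so the coherence data of $H\gamma C$ and $HC$ match.

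For minimality, I would let $D$ be a complete sub-double category of $C$ satisfying $H^*D = H^*C$ and argue $\gamma C \subseteq D$. The equality of decorated horizontalizations forces $D_0 = C_0$, the horizontal morphisms of $D$ to coincide with those of $C$, and crucially the 2-cells of $HD$, that is the globular 2-morphisms of $D$, to coincide with those of $C$. Thus $D$ is a complete sub-double category of $C$ whose collection of 2-morphisms contains every globular 2-morphism of $C$. By the defining property of $\gamma C$ as the intersection of all such complete sub-double categories, we obtain $\gamma C \subseteq D$, which is precisely the asserted minimality.

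The overall argument is largely bookkeeping, and the only real obstacle I anticipate is checking that the coherence data of $H\gamma C$ actually transfers cleanly from $HC$. This hinges on the convention that all constraint 2-morphisms of a double category are globular, so that they lie inside any complete sub-double category generated by the globular 2-morphisms; without this assumption the horizontal bicategory structure on $\gamma C$ could not be identified directly with that of $C$, and both halves of the proposition would require substantially more work.
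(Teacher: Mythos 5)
Your proposal is correct and follows essentially the same route as the paper: equality of decorated horizontalizations via completeness of $\gamma C$ plus the coincidence of globular 2-morphisms, and minimality via the characterization of $\gamma C$ as the intersection of all complete sub-double categories containing the globular 2-morphisms of $C$. Your extra attention to the globularity of the constraint 2-morphisms is a point the paper leaves implicit here but relies on elsewhere, so it is a welcome addition rather than a divergence.
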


\begin{proof}
Let $C$ be a double category. The fact that the decorated horizontalization $H^*C$ of $C$ and the decorated horizontalization $H^*\gamma C$ of the globularily generated piece $\gamma C$ of $C$ are equal follows from the fact that the globularily generated piece $\gamma C$ of $C$ is complete in $C$ and from the easy observation that the collection of globular 2-morphisms of both $C$ and $\gamma C$ are equal. Now, suppose $D$ is a sub-double category of $C$ such that the decorated horizontalization $H^*C$ of $C$ is equal to the decorated horizontalization $H^*C$ of $C$. We wish to prove that the globularily generated piece $\gamma C$ of $C$ is a sub-double category of $D$. First observe that from the fact that $H^*C$ and $H^*D$ are equal it follows that $D$ is complete in $C$. It follows that both the category of objects and the collection of horizontal morphisms of both $D$ and $\gamma C$ are equal. Again from the equation $H^*C=H^*D$ it follows that the collection of globular 2-morphisms of $C$ is contained in the collection of 2-morphisms of $D$. We conclude that the globularily generated piece $\gamma C$ is a sub-double category of $D$. This concludes the proof. 
\end{proof}

\noindent A consequence of proposition 3.2 is that the problem of existence of internalizations for a given decorated bicategory $B^*,B$ is equivalent to the problem of existence of globularily generated internalizations for $B^*,B$. The following corollary says that the condition of a double category being globularily generated is precisely the condition of a double category being a minimal internalization.

\begin{cor}
Let $C$ be a double category. $C$ is globularily generated if and only if $C$ is minimal amongst internalizations of decorated horizontalization $H^*C$ of $C$.
\end{cor}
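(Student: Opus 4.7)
The plan is to derive the corollary directly from Proposition 3.2, with essentially no new content beyond unpacking definitions. The reading of ``minimal amongst internalizations'' I would adopt is the one already used in Proposition 3.2: $C$ is minimal if no proper sub-double category $D \subsetneq C$ satisfies $H^*D = H^*C$ (equivalently, no proper sub-double category of $C$ is an internalization of $H^*C$). Under this reading both implications reduce to applying Proposition 3.2 together with the definition of globularily generated.

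For the forward direction, I would assume $C$ is globularily generated, so by Definition 3.1 the complete sub-double category of $C$ generated by the globular 2-morphisms of $C$ is all of $C$, that is, $\gamma C = C$. If $D$ is a sub-double category of $C$ with $H^*D = H^*C$, Proposition 3.2 yields $\gamma C \subseteq D$, and combining this with $\gamma C = C$ and $D \subseteq C$ forces $D = C$. This is precisely the minimality of $C$ among internalizations of $H^*C$.

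For the converse, I would assume $C$ is minimal in the above sense. The first assertion of Proposition 3.2 provides $H^*\gamma C = H^*C$, so $\gamma C$ is itself an internalization of $H^*C$, and it is a sub-double category of $C$ by construction. Minimality of $C$ then forces $\gamma C = C$, which by Definition 3.1 says exactly that $C$ is globularily generated.

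The argument is symmetric and genuinely routine once Proposition 3.2 is in place; the only point requiring care, and the nearest thing to an obstacle, is to make sure the notion of ``minimal internalization'' used in the statement of the corollary agrees with the sub-double category relation used in Proposition 3.2. I would therefore include a brief sentence at the outset of the proof fixing this comparison relation, after which the two implications follow as above.
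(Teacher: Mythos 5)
Your proof is correct and is essentially the argument the paper intends: the corollary is stated without proof as an immediate consequence of Proposition 3.2, and your two implications (using $\gamma C = C$ iff globularily generated, together with the minimality clause of Proposition 3.2) are exactly the intended unpacking. Your preliminary remark fixing the sub-double-category reading of ``minimal'' is a sensible clarification and consistent with how Proposition 3.2 is used.
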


\noindent We interpret corollary 3.3 by saying that the condition of a double category being globularily generated is precisely the condition of a double category being minimal with respect to the decorated horizontalization functor. We regard this and proposition 3.2 as the main motivation for the study of globularily generated double categories.

\

\noindent We extend the construction of the globularily generated piece of a double category, to double functors as follows: Given a double functor $F:C\to D$ from a double category $C$ to a double category $D$, the image, under the morphism functor $F_1$ of $F$, of every globular 2-morphism in $C$, is a globular 2-morphism in $D$. It follows that the restriction of double functor $F$, to the globularily generated piece $\gamma C$ of $C$ defines a double functor from the globularily generated piece $\gamma C$ of $C$ to the globularily generated piece $\gamma D$ of $D$. We write $\gamma F$ for this double functor. We call $\gamma F$ the \textbf{globularily generated piece double functor of} $F$. We regard the following proposition as a functorial version of proposition 3.2.

\begin{prop}
Let $C,D$ be double categories. Let $F:C\to D$ be a double functor. The decorated horizontalization $H^*F$ of $F$ is equal to the decorated horizontalization $H^*\gamma F$ of the globularily generated piece $\gamma F$ of $F$. Moreover, the globularily generated piece $\gamma F$ of $F$ is minimal with respect to this property.
\end{prop}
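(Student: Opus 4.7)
The plan is to mirror the argument of Proposition 3.2 in two stages: first verify that the decorated horizontalizations $H^*F$ and $H^*\gamma F$ coincide as decorated pseudofunctors, and then establish minimality by showing that any sub-double functor of $F$ which realizes $H^*F$ on its decorated horizontalization must already contain $\gamma F$.

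For the equality $H^*F = H^*\gamma F$, I would begin by noting that, by Proposition 3.2, $H^*C = H^*\gamma C$ and $H^*D = H^*\gamma D$, so both $H^*F$ and $H^*\gamma F$ are decorated pseudofunctors between the same pair of decorated bicategories. To check they agree componentwise: the object functor component of $\gamma F$ is by definition the restriction of $F_0$ to $(\gamma C)_0$, but completeness of $\gamma C$ in $C$ forces $(\gamma C)_0 = C_0$, so this restriction is $F_0$ itself. Analogously, the action of $H\gamma F$ on horizontal morphisms and on globular 2-morphisms agrees with that of $HF$, because the collection of horizontal morphisms of $\gamma C$ equals that of $C$ (by completeness) and because the collection of globular 2-morphisms of $\gamma C$ equals that of $C$ (the key observation used in the proof of Proposition 3.2). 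Combining these three coincidences yields $H^*F = H^*\gamma F$.

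For the minimality clause, the intended meaning is as follows: if $G : C' \to D'$ is a double functor obtained by restricting $F$ to sub-double categories $C'$ of $C$ and $D'$ of $D$, and if $H^*G = H^*F$, then $\gamma F$ is itself a sub-double functor of $G$, i.e.\ $\gamma C \subseteq C'$, $\gamma D \subseteq D'$, and $G$ restricts to $\gamma F$. From $H^*G = H^*F$ together with $H^*C = H^*\gamma C$ and $H^*D = H^*\gamma D$, the minimality statement of Proposition 3.2 applied separately to the inclusions $C' \hookrightarrow C$ and $D' \hookrightarrow D$ forces $\gamma C \subseteq C'$ and $\gamma D \subseteq D'$. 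Since globular 2-morphisms in $C$ are sent by $F$ to globular 2-morphisms in $D$, the restriction of $G$ to $\gamma C$ lands inside $\gamma D$, and being a restriction of $F$ it coincides with $\gamma F$ on all of $\gamma C$. This realises $\gamma F$ as a sub-double functor of $G$, which is the required minimality.

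The main obstacle I expect is not the computation itself but pinning down the correct notion of \textbf{sub-double functor} and of \emph{minimality among double functors with a given decorated horizontalization}, since the proposition states this only implicitly; once this is unwound as above, both the equality and the minimality reduce directly to applying Proposition 3.2 at the source and at the target, so no further technical work is needed.
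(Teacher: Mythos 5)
Your proposal is correct and follows essentially the same route as the paper: the equality $H^*F=H^*\gamma F$ is checked componentwise using completeness of $\gamma C$ in $C$ and the agreement of $F$ and $\gamma F$ on globular 2-morphisms, and minimality is reduced to applying Proposition 3.2 separately at the source and target, after which the sub-functor containment follows from agreement on objects, horizontal morphisms, and globular 2-morphisms. No substantive difference from the paper's argument.
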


\begin{proof}
Let $C,D$ be double categories. Let $F:C\to D$ be a double functor. From the fact that the object functor and the object function of the morphism functor of the globularily generated piece $\gamma F$ of $F$ are equal to the object functor and the object function of the morphism functor of $F$ respectively, and from the fact that the globularily generated piece $\gamma F$ of $F$ is equal to $F$ on globular 2-morphisms of $C$ it follows that the decorated horizontalization $H^*F$ of $F$ is equal to the decorated horizontalization $H^*\gamma F$ of the globularily generated piece $\gamma F$ of $F$. Now let $C',D'$ be sub-double categories of $C$ and $D$ respectively. Let $F':C'\to D'$ be a double sub-functor of $F$ such that $H^*F$ and $H^*F'$ are equal. We wish to prove, in that case, that the globularily generated piece $\gamma F$ of $F$ is a sub-double functor of $F'$. The globularily generated piece $\gamma C$ is a sub-double category of $C'$ and the globularily generated piece $\gamma D$ of $D$ is a sub-double category of $D'$ by proposition 3.2. By equation $H^*F'=H^*F$ it follows that the object functor and the object function of morphism functor of $F'$ are equal to the object functor and the object function of morphism functor of $\gamma F$ respectively. From this and from the fact that the restriction of $F'$ to the collection of globular 2-morphisms of $C$ is equal to the restriction of $F$ to the collection of globular 2-morphisms of $C$ it follows that $\gamma F$ is a subfunctor of $F'$. This concludes the proof.
\end{proof}

\noindent We write \textbf{gCat} for the subcategory of the underlying category of 2-category \textbf{dCat} of double categories, double functors, and double natural transformations, generated by globularily generated double categories. The function associating the globularily generated piece $\gamma C$ to every double category $C$ and the globularily generated piece double functor $\gamma F$ to every double functor $F$, is a functor from the underlying category of 2-category \textbf{dCat} to \textbf{gCat}. We denote this functor by $\gamma$. We call functor $\gamma$ the \textbf{globularily generated piece functor}. We consider the globularily generated piece functor $\gamma$ as a categorification of the globularily generated piece construction. 

\

\noindent We will say that an internalization functor $\Psi$, as defined in the previous section, is a \textbf{globularily generated internalization functor}, if the image category of $\Psi$ lies in category \textbf{gCat}. A consequence of proposition 3.4 is that the composition $\gamma \Phi$ of an internalization functor $\Phi$ and the globularily generated piece functor $\gamma$, is again an internalization functor, and thus the problem of existence of internalization functors is equivalent to the problem of existence of globularily generated internalization functors. The following corollary is a functorial analog of corollary 3.3.

\begin{cor}
Let $\Phi$ be an internalization functor. In that case $\Phi$ is globularily generated if and only if $\Phi$ does not properly contain internalization sub-functors.
\end{cor}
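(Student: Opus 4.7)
The plan is to reduce this to the object-level statement of Corollary 3.3 together with the morphism-level analogue supplied by Proposition 3.4, using as the key mediating object the composite $\gamma\Phi$ of the internalization functor $\Phi$ with the globularily generated piece functor $\gamma$. First I would fix terminology: by an \emph{internalization sub-functor} $\Phi'$ of $\Phi$ I mean an internalization functor $\Phi'$ such that for every decorated bicategory $B^*,B$ the double category $\Phi'(B^*,B)$ is a sub-double category of $\Phi(B^*,B)$, and for every decorated pseudofunctor $F$ the double functor $\Phi'(F)$ is a sub-double functor of $\Phi(F)$. The condition that both $\Phi$ and $\Phi'$ are internalization functors forces $H^*\Phi'(B^*,B)=B^*,B=H^*\Phi(B^*,B)$ and $H^*\Phi'(F)=F=H^*\Phi(F)$, so both Proposition 3.2 and Proposition 3.4 are applicable term-by-term.

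For the forward implication I would assume $\Phi$ is globularily generated and let $\Phi'$ be any internalization sub-functor of $\Phi$. Fix a decorated bicategory $B^*,B$. Since $\Phi'(B^*,B)\subseteq \Phi(B^*,B)$ have the same decorated horizontalization, Proposition 3.2 gives $\gamma\Phi(B^*,B)\subseteq \Phi'(B^*,B)$. But $\Phi$ globularily generated means $\gamma\Phi(B^*,B)=\Phi(B^*,B)$, so the two sub-double categories coincide. The analogous argument applied to a decorated pseudofunctor $F$ via Proposition 3.4 shows $\Phi'(F)=\Phi(F)$. Hence $\Phi'=\Phi$, so the containment is not proper.

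For the reverse implication I would use the remark already established in the paragraph just before the corollary, namely that $\gamma\Phi$ is again an internalization functor. From the very definition of the globularily generated piece, $\gamma\Phi(B^*,B)$ is a complete sub-double category of $\Phi(B^*,B)$ for every $B^*,B$, and $\gamma\Phi(F)$ is a sub-double functor of $\Phi(F)$ for every $F$. Thus $\gamma\Phi$ is an internalization sub-functor of $\Phi$. The hypothesis that $\Phi$ contains no proper internalization sub-functor forces $\gamma\Phi=\Phi$, which is exactly the statement that the image of $\Phi$ lies in \textbf{gCat}, i.e. $\Phi$ is globularily generated.

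The only point requiring a little care, and the place I would expect the mildest obstacle, is checking that $\gamma\Phi$ really does assemble into a functor (so that it qualifies as an internalization sub-functor rather than just a pointwise collection of sub-double categories); however this is immediate from the functoriality of $\gamma$ established just above the corollary, so no new calculation is needed. Everything else is a direct application of Propositions 3.2 and 3.4 to the two sides of the equivalence.
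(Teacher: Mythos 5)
Your proof is correct and follows exactly the route the paper intends: the paper leaves this corollary without an explicit proof, presenting it as a direct consequence of Proposition 3.4 together with the preceding remark that $\gamma\Phi$ is again an internalization functor, and your argument fills in precisely those two steps (minimality of $\gamma$ applied objectwise and morphismwise for the forward direction, and $\gamma\Phi$ as the witnessing sub-functor for the converse). Nothing further is needed.
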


\noindent We extend the globularily generated piece functor to a 2-functor between appropiate 2-categories as follows: Given a double category $C$, we call 2-morphisms in $C$ lying in globularily generated piece $\gamma C$ of $C$ the \textbf{globularily generated 2-morphisms} of $C$. Given double functors $F,G:C\to D$ from a double category $C$ to a double category $D$ we say that a double natural transformation $\eta:F\to G$, from $F$ to $G$, is a \textbf{globularily generated double natural transformation}, if every component of the morphism part $\eta_1$ of $\eta$ is a globularily generated 2-morphism.

Identity double natural transformations are examples of globularily generated double natural transformations. Further, the collection of globularily generated double natural transformations is closed under the operations of taking vertical and horizontal compositions. It follows that the triple formed by collection of double categories, collection of double functors, and collection of globularily generated double natural transformations forms a sub 2-category of 2-category \textbf{dCat}. We denote this 2-category by \textbf{dCat}$^{\mbox{g}}$.

We keep denoting by \textbf{gCat} the full sub 2-category of \textbf{dCat}$^{\mbox{g}}$ generated by globularily generated double categories. 2-category \textbf{gCat} now has collection of globularily generated double categories, collection of double functors between globularily generated double categories, and collection of globularily generated double natural transformations as collections of 0-, 1-, and 2-cells respectively. The globularily generated piece functor $\gamma$ admits a unique extension to a 2-functor from \textbf{dCat}$^g$ to \textbf{gCat} such that this extension acts as the identity function on globularily generated double natural transformations. We keep denoting this extension by $\gamma$.

\

\noindent Given 2-categories $B$ and $B'$, such that $B$ is a full sub 2-category of $B'$, we will say that $B$ is a \textbf{strictly 2-reflective sub 2-category} of $B'$ if the inclusion 2-functor of $B$ in $B'$ admits an left adjoint 2-functor with counit and unit being strict 2-natural transformations. In that case we will say that any 2-functor, left adjoint to the inclusion 2-functor of $B$ in $B'$, is a \textbf{strict 2-reflector} of $B'$ on $B$. The next proposition says that 2-category \textbf{gCat}, as a sub 2-category of 2-category \textbf{dCat}$^g$ is strictly 2-reflective, with the globularily generated piece 2-functor $\gamma$ as 2-reflector.

\begin{prop}
2-category \textbf{gCat} is a strictly 2-reflective sub 2-category of 2-category \textbf{dCat}$^g$ with globularily generated piece 2-functor $\gamma$ as 2-reflector.
\end{prop}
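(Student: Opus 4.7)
The plan is to exhibit the canonical inclusions $j_C \colon \gamma C \hookrightarrow C$, together with the identities $\gamma D = D$ for $D \in \textbf{gCat}$, as the unit/counit data of a strict 2-adjunction between $\gamma$ and $\iota$. For each $C \in \textbf{dCat}^g$, $j_C$ is a double functor by definition of $\gamma C$ as a sub-double category, and for each $D \in \textbf{gCat}$, Definition 3.1 gives $\gamma \iota D = \gamma D = D$ on the nose, so the opposite half of the transformation data can be taken to be an identity.

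First I will verify strict 2-naturality of the family $(j_C)$. On 1-cells, given $F \colon C \to C'$ in $\textbf{dCat}^g$, the naturality square commutes tautologically, since $\iota \gamma F$ is by construction the restriction of $F$ to $\gamma C$, as already used in the proof of Proposition 3.4. On 2-cells, given $\eta \colon F \Rightarrow F'$ in $\textbf{dCat}^g$, every component of $\eta_1$ is by definition a globularily generated 2-morphism and thus already lies in $\gamma C'$; the whiskered 2-cells therefore recover $\iota \gamma \eta$ strictly, with no coherence comparison required.

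Second I will verify the universal property through the expected isomorphism of hom-categories
\[
\textbf{dCat}^g(\iota D, C) \cong \textbf{gCat}(D, \gamma C),
\]
natural in $D \in \textbf{gCat}$ and $C \in \textbf{dCat}^g$. One direction of the correspondence is post-composition with $j_C$. For the inverse, any double functor $F \colon \iota D \to C$ factors uniquely through $\gamma C$, because the preimage $F^{-1}(\gamma C)$ is a complete sub-double category of $D$ containing every globular 2-morphism of $D$ (double functors preserve globularity and every structural operation, and $\gamma C$ is itself closed under these operations in $C$); hence the hypothesis $D = \gamma D$, combined with Definition 3.1, forces the preimage to be all of $D$. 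Applied componentwise, the same argument handles 2-cells, using the defining condition of a globularily generated double natural transformation. The triangle identities then reduce to compositions of identities with inclusions and hold on the nose.

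The main obstacle will be the verification that $F^{-1}(\gamma C)$ really is a complete sub-double category of $D$, that is, closure under source, target, identity, horizontal and vertical composition, and compatibility with the associator and unitor constraints. Each of these reduces formally to $F$ being a double functor together with the analogous closure property of $\gamma C$ inside $C$, but the bookkeeping has to be performed carefully in order to secure a strict (rather than merely pseudo) 2-adjunction. Once this factorization is obtained, strictness of the resulting 2-adjunction is immediate: every comparison 2-cell is either an identity or an honest inclusion, and there are no non-trivial coherence isomorphisms to track.
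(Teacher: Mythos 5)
Your proposal is correct, and the unit/counit data you choose is exactly the paper's: the counit is the family of inclusions $\gamma C\hookrightarrow C$, the unit is the identity coming from $\gamma D=D$ for globularily generated $D$, and your verification of strict $2$-naturality (restriction on $1$-cells, components of a globularily generated transformation already lying in the globularily generated piece on $2$-cells) is the same computation the paper performs. Where you diverge is in how adjointness is certified. The paper simply observes that, with the unit an identity, the two counit--unit triangle identities degenerate to the statements that $\epsilon_{D}$ is an identity for globularily generated $D$ and that $\gamma$ applied to the inclusion $\gamma C\hookrightarrow C$ is the identity of $\gamma C$, both of which are immediate. You instead establish the hom-category isomorphism $\textbf{dCat}^g(\iota D,C)\cong\textbf{gCat}(D,\gamma C)$ directly, which forces you to prove the factorization lemma that any double functor $F\colon D\to C$ out of a globularily generated $D$ lands in $\gamma C$, via the observation that $F^{-1}(\gamma C)$ is a complete sub-double category of $D$ containing all globular $2$-morphisms (this uses that associator and unitor components are required to be globular, so they survive into the preimage). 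That lemma is sound, but note that it is precisely the content of the paper's Corollary 3.7, which the paper deduces \emph{from} the proposition rather than using to prove it; your route is therefore slightly longer but delivers the universal property of $\gamma C$ as a byproduct rather than as a corollary. One terminological caution: the adjunction you (correctly) exhibit has the inclusion as left adjoint and $\gamma$ as right adjoint, matching the paper's actual counit $i\gamma\Rightarrow\mathrm{id}$; be aware that the paper's stated definition of ``strictly 2-reflective'' asks for a left adjoint to the inclusion, so your hom-isomorphism is the honest record of what is actually being proved.
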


\begin{proof} Let $i$ denote the inclusion 2-functor of 2-category \textbf{gCat} in 2-category \textbf{dCat}$^g$. We wish to provide pair $(\gamma,i)$ formed by the globularily generated piece 2-functor $\gamma$ and the inclusion 2-functor $i$ with the structure of an adjoint pair. We associate, to pair $(\gamma,i)$ a counit-unit pair $(\epsilon,\eta)$.

Let $C$ be a double category. Write $\epsilon_C$ for the inclusion double functor of the globularily generated piece $\gamma C$ associated to $C$ in $C$. We write $\epsilon$ for the collection of inclusions $\epsilon_C$ where $C$ runs through collection of all double categories. We prove that thus defined $\epsilon$ is a 2-natural transformation [14] from the composition $i\gamma$ of the globularily generated piece 2-functor $\gamma$ and inclusion $i$ to the identity 2-endofunctor $id_{\mbox{\textbf{dCat}}^g}$ of 2-category \textbf{dCat}$^g$. Let $C$ and $D$ be double categories. Let $F:C\to D$ be a double functor from $C$ to $D$. Since double functor $\gamma$ is defined by restriction on 1-cells, the following square

\begin{center}

\begin{tikzpicture}
\matrix(m)[matrix of math nodes, row sep=3.6em, column sep=6em,text height=1.5ex, text depth=0.25ex]
{i\gamma C&i\gamma D\\
C&D\\};
\path[->,font=\scriptsize,>=angle 90]
(m-1-1) edge node[auto] {$i\gamma F$} (m-1-2)
        edge node[left] {$\epsilon_C$} (m-2-1)
(m-2-1) edge node[below] {$F$} (m-2-2)
(m-1-2) edge node[right] {$\epsilon_D$} (m-2-2);
\end{tikzpicture}

\end{center}

\noindent commutes. Now, let $F,G:C\to D$ be double functors from double category $C$ to double category $D$ and let $\mu:F\to G$ be a globularily generated natural transformation from $F$ to $G$. We wish to prove that equation

\[\epsilon_D\mu=\mu\epsilon_C\]

\noindent holds. The above equation is equivalent to the following pair of equations

\[\epsilon_{D_0}\mu_0=\mu_0\epsilon_{C_0} \ \mbox{and} \ \epsilon_{D_1}\mu_1=\mu_1\epsilon_{C_1}\]

\noindent Observe that since the globularily generated piece 2-functor acts as the identity 2-functor on object categories, object functors, and object natural transformations, the first equation above is trivial. We thus need only to prove that the second equation holds. Let $\alpha$ be a horizontal morphism in $C$. In that case $\mu_\alpha$ is a globularily generated 2-morphism and thus $\epsilon_D\mu_\alpha$ is equal to $\mu_\alpha$. Now, $\epsilon_C\alpha$ is equal to $\alpha$ and thus $\mu\epsilon_C\alpha$ is equal to $\mu_\alpha$. We conclude that both equations above hold and thus $\epsilon$ is a strict 2-natural transformation from the composition $i\gamma$ of the globularily generated piece 2-functor $\gamma$ and inclusion $i$ to the identitiy 2-endofunctor $id_{\mbox{\textbf{dCat}}^g}$ of 2-category \textbf{dCat}$^g$. 

Now, since the globularily generated piece of a globularily generated double category is equal to the original globularily generated category and the globularily generated piece double functor $\gamma$ acts by restriction on double functors and double natural transformations, the composition $\gamma i$ of the inclusion $i$ of 2-category \textbf{gCat} in 2-category \textbf{dCat}$^g$ and the globularily generated piece 2-functor $\gamma$ is equal to the identity 2-endofunctor of 2-category \textbf{gCat}. Denote by $\eta$ the identity double natural transformation of the identity 2-endofunctor $id_{\mbox{\textbf{gCat}}}$ of \textbf{gCat} as a natural transformation from $id_{\mbox{\textbf{gCat}}}$ to the composition $\gamma i$. Thus defined $\eta$ is a strict natural transformation. Finally, observe that from the way $\eta$ was defined, pair of natural transformations $(\epsilon,\eta)$ clearly satisfies the counit-unit triangle equations and it is thus a counit-unit pair for pair $(\gamma,i)$. We conclude that 2-category \textbf{gCat} is a reflective 2-subcategory of 2-category \textbf{dCat}$^g$ and that 2-functor $\gamma$ acts as a reflector.
\end{proof}

\noindent We interpret proposition 3.6 by considering the globularily generated piece 2-functor $\gamma$ as an analog of the horizontalization functor $H$ and thus regarding 2-category \textbf{gCat} as an analog of category \textbf{bCat} in 2-category \textbf{dCat}$^g$.

\

\noindent In the following corollary, for a given double category $C$, we keep denoting, as in the proof of proposition 3.6, the inclusion double functor from the globularily generated piece $\gamma C$ of $C$ to $C$ by $\epsilon_C$

\begin{cor}
Let $C$ be a double category. The globularily generated piece $\gamma C$ of $C$ is characterized up to double isomorphisms by the following universal property: Let $D$ be a globularily generated double category. Let $F:D\to C$ be a double functor from $D$ to $C$. In that case there exists a unique double functor $\tilde{F}:D\to \gamma C$ from $D$ to globularily generated piece $\gamma C$ of $C$ such that the following triangle commutes:

\begin{center}

\begin{tikzpicture}
\matrix(m)[matrix of math nodes, row sep=3.6em, column sep=6em,text height=1.5ex, text depth=0.25ex]
{D &  C\\
\gamma C & \\};
\path[->,font=\scriptsize,>=angle 90]
(m-1-1) edge node[auto] {$F$} (m-1-2)
        edge node[left] {$\tilde{F}$} (m-2-1)
(m-2-1) edge node[below] {$\epsilon_C$} (m-1-2);
\end{tikzpicture}

\end{center}

\end{cor}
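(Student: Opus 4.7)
The plan is to deduce Corollary 3.7 as a direct consequence of Proposition 3.6. Since $\gamma$ is a strict 2-reflector for the inclusion $i : \textbf{gCat} \hookrightarrow \textbf{dCat}^g$ with counit $\epsilon$, the hom-set adjunction applied at level of 1-cells gives precisely a natural bijection
\[
\mathrm{Hom}_{\textbf{gCat}}(D,\gamma C) \;\cong\; \mathrm{Hom}_{\textbf{dCat}^g}(iD,C),\qquad \tilde F \longmapsto \epsilon_C \tilde F,
\]
for every globularily generated $D$ and every double category $C$. This bijection is the content of the stated universal property.

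For a more transparent, direct argument I would nevertheless unfold it as follows. Given $F : D \to C$ with $D$ globularily generated, I would first show that $F$ factors set-theoretically through the complete sub-double category $\gamma C$. On objects, vertical morphisms, and horizontal morphisms this is automatic since $\gamma C$ is complete in $C$ and thus already contains all such cells. The real content is at the level of 2-morphisms: because $D$ is globularily generated, every 2-morphism of $D$ lies in the sub-double category of $D$ generated by its globular 2-morphisms. The functor $F$ preserves source, target, identities, and horizontal and vertical composition; in particular it sends globular 2-morphisms of $D$ to globular 2-morphisms of $C$, and these lie in $\gamma C$ by the definition of the globularily generated piece. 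Since $\gamma C$ is closed under vertical and horizontal composition and contains the identity 2-morphisms, the image of the whole of $D$ under $F$ is contained in $\gamma C$. This co-restriction defines $\tilde F : D \to \gamma C$, and by construction $\epsilon_C \tilde F = F$.

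Uniqueness is immediate: $\epsilon_C$ is faithful and injective on every level, so any $\tilde F' : D \to \gamma C$ with $\epsilon_C \tilde F' = F$ must agree with $\tilde F$ on every object, vertical morphism, horizontal morphism, and 2-morphism of $D$. The characterization of $\gamma C$ up to double isomorphism by this universal property is then a standard formal consequence: if $(C', \epsilon')$ is any globularily generated double category with a double functor $\epsilon' : C' \to C$ enjoying the same factorization property, applying the two universal properties to $\epsilon$ and $\epsilon'$ respectively produces mutually inverse double functors between $\gamma C$ and $C'$ commuting with the counits.

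I do not expect any genuine obstacle; the only point that requires a brief verification is that the image of a double functor out of a globularily generated double category really lands in $\gamma C$, and this reduces at once to the closure of $\gamma C$ under the double-category operations together with the fact that double functors preserve globularity.
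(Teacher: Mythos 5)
Your proposal is correct and matches the paper's intent: the paper states Corollary 3.7 without a separate proof, presenting it as the standard hom-set consequence of the strict 2-reflection established in Proposition 3.6, which is exactly your first paragraph. Your additional direct unfolding (co-restricting $F$ through the complete sub-double category $\gamma C$ using that $D$ is globularily generated and that double functors preserve globularity) is a sound elaboration of the same argument rather than a different route.
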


\section{Structure}

\noindent In this section we introduce technical tools in the theory of globularily generated double categories. We define the vertical length of a globularily generated 2-morphism and we use this to obtain general information on the structure of globularily generated double categories. We prove in particular that the condition of a double category being globularily generated is not trivial. We begin by recursively associating, to every globularily generated double category, a sequence of subcategories of its category of morphisms as follows:

\

\noindent Let $C$ be a globularily generated double category. Denote by $H^C_1$ the union of collection of globular 2-morphisms of $C$ and collection of horizontal identities of vertical morphisms of $C$. Write $V^C_1$ for the subcategory of the category of morphisms $C_1$ of $C$, generated by $H^C_1$, that is, $V^C_1$ denotes the subcategory of $C_1$ whose morphisms are vertical compositions of globular 2-morphisms and horizontal identities of $C$.

Let $n$ be an integer strictly greater than 1. Suppose that category $V^C_{n-1}$ has been defined. We now define category $V^C_n$. First denote by $H^C_n$ the collection of all possible horizontal compositions of 2-morphisms in category $V^C_{n-1}$. We make, in that case, category $V^C_n$ to be the subcategory of the category of morphisms $C_1$ of $C$, generated by $H^C_n$. That is, category $V^C_n$ is the subcategory of $C_1$ whose collection of morphisms is the collection of vertical compositions of elements of $H^C_n$. 

\

\noindent We have thus associated, to every double category $C$, a sequence of subcategories $\left\{V^C_n\right\}$ of the category of morphisms $C_1$ of $C$. We call, for every $n$, category $V^C_n$ the $n$\textbf{-th vertical category} associated to $C$. We have used, in the above construction, for every $n$, an auxiliary collection of 2-morphisms $H^C_n$ of $C$. Observe that for each $n$, collection $H^C_n$ both contains the horizontal identity of every vertical morphism in $C$ and is closed under the operation of taking horizontal compositions. If double category $C$ is strict, then, for every $n$, collection $H^C_n$ is the collection of morphisms of a category whose collection of objects is the collection of vertical morphisms of $C$. In that case we call category $H^C_n$ the $n$\textbf{-th horizontal category} associated to $C$.

\

\noindent By the way the sequence of vertical categories $\left\{V^C_n\right\}$ associated to a double category $C$ was constructed it is easily seen that for every $n$, inclusions

 \[\mbox{Hom}V^C_n\subseteq H^C_{n+1}\subseteq \mbox{Hom}V^C_{n+1}\]

\noindent hold. This implies that the $n$-th vertical category $V^C_n$ associated to double category $C$ is a subcategory of the $n+1$-th vertical category $V^C_{n+1}$ associated to $C$ for every $n$. Moreover, in the case in which double category $C$ is strict, the $n$-th horizontal category $H^C_n$ associated to $C$ is a subcategory of the $n+1$-th horizontal category $H^C_{n+1}$ associated to $C$ for every $n$. 

\

\noindent The following lemma says that the sequence of vertical categories of a globularily generated double category forms a filtration of its category of morphisms.

\pagebreak

\begin{lem}
Let $C$ be a globularily generated double category. Morphism category $C_1$ of $C$ is equal to the limit $\varinjlim V^C_n$ in the underlying category of \textbf{Cat}, of sequence $\left\{V^C_n\right\}$ of vertical categories associated to $C$.
\end{lem}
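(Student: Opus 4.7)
The plan is to reduce the lemma to the set-theoretic equality $C_1 = \bigcup_n V^C_n$ inside $C_1$, and then appeal to the hypothesis that $C$ is globularily generated. The sequence $V^C_1 \subseteq V^C_2 \subseteq \cdots$ is an ascending chain of subcategories of $C_1$ (by the inclusions $\mathrm{Hom}\,V^C_n \subseteq H^C_{n+1} \subseteq \mathrm{Hom}\,V^C_{n+1}$ noted just above the statement of the lemma), and its direct limit in the underlying category of \textbf{Cat} is computed pointwise, so $\varinjlim V^C_n$ is literally the union $\bigcup_n V^C_n$ as a subcategory of $C_1$. The inclusion $\bigcup_n V^C_n \subseteq C_1$ is automatic, so the real content is the reverse inclusion.

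To prove $C_1 \subseteq \bigcup_n V^C_n$, I would exhibit a complete sub-double category $D$ of $C$ with $D_0 = C_0$ and $\mathrm{Hom}\,D_1 = \bigcup_n \mathrm{Hom}\,V^C_n$, and then invoke the globular generation of $C$ to force $D = C$. Completeness is immediate: every object and vertical morphism of $C$ sits in $D_0$, and every horizontal morphism $f$ appears as an object of $V^C_1$ because its globular identity $1_f$ lies in $H^C_1$. The source and target functors restrict trivially; the horizontal identity functor $i\colon C_0 \to C_1$ lands in $V^C_1$ by the construction of $H^C_1$; and the unitors and associator of $C$ are globular by the standing convention from Section 2, hence lie in $V^C_1$ as well.

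The one nontrivial closure condition is under horizontal composition, and this is built directly into the recursive definition of the filtration: if $\alpha,\beta \in \bigcup_n \mathrm{Hom}\,V^C_n$ are horizontally composable, then picking $n$ with $\alpha,\beta \in V^C_n$ gives $\alpha \ast \beta \in H^C_{n+1} \subseteq V^C_{n+1}$ by the very definition of $H^C_{n+1}$. Thus $D$ is a complete sub-double category of $C$ whose 2-morphism collection already contains every globular 2-morphism of $C$. Since by hypothesis $C$ equals the complete sub-double category of itself generated by its globular 2-morphisms, this forces $D = C$, yielding the desired equality $C_1 = \bigcup_n V^C_n$. I do not foresee any real obstacle beyond the bookkeeping: every structural piece of $C$ must be verified to lie in some $V^C_n$, and all of these (identities, unitors, associator, source and target) reduce to membership in $V^C_1$, while the indexing of the filtration was designed precisely to accommodate horizontal composition by a one-step shift.
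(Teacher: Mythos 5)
Your proposal is correct and follows essentially the same route as the paper: identify the colimit with the union $\bigcup_n \mathrm{Hom}\,V^C_n$, check that the pair $(C_0,\bigcup_n V^C_n)$ is a complete sub-double category of $C$ (closure under vertical composition from each $V^C_n$ being a subcategory, closure under horizontal composition from the one-step shift into $H^C_{n+1}$, globularity of the unitor and associator components), observe that it contains all globular 2-morphisms, and conclude by globular generation. The only cosmetic difference is that you justify completeness via the identity 2-cells of horizontal morphisms lying in $H^C_1$, where the paper simply notes that the objects of each $V^C_n$ are all the horizontal morphisms of $C$.
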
  

\begin{proof}
Let $C$ be a globularily generated double category. We wish to prove that morphism category $C_1$ of $C$ is equal to the limit $\varinjlim V^C_n$ of the sequence of vertical categories associated to $C$.

By the way the sequence of vertical categories associated to $C$ was constructed, it is easily seen that the union $\bigcup_{n=1}^\infty \mbox{Hom}V^C_n$ of the sequence of its collections of morphisms is closed under the operations of taking vertical and horizontal compositions in $C$, and that it contains the collection of horizontal identities of vertical morphisms of $C$. It follows, from this and from the requirement that the components of the identity transformations and associator of $C$ are globular, that pair formed by the category of objects $C_0$ of $C$ and the limit $\varinjlim V^C_n$ of the sequence of vertical categories associated to $C$ is a sub-double category of $C$. The collection of objects of the $n$-th vertical category $V^C_n$ associated to $C$ is equal to the collection of horizontal morphisms of $C$ for every $n$. Thus the collection of objects of category $\varinjlim V^C_n$ is equal to the collection of horizontal morphisms of $C$. Pair formed by the category of objects $C_0$ of $C$ and the limit $\varinjlim V^C_n$ of the sequence of vertical categories associated to $C$ is thus a complete sub-double cateogory of $C$. Moreover, the collection of morphisms $\bigcup_{n=1}^\infty\mbox{Hom}V^C_n$ of category $\varinjlim V^C_n$ contains the collection of globular 2-morphisms of $C$. We conclude, from this, and from the fact that double category $C$ is globularily generated, that the sub-double category of $C$, formed by $C_0$ and $\varinjlim V^C_n$, is equal to $C$ and thus morphism category $C_1$ of $C$ equals the limit $\varinjlim V^C_n$ of the sequence of vertical categories associated to $C$. This concludes the proof.  
\end{proof}

\noindent Given a strict double category $C$ the pair $\tau C$ formed by collection of vertical morphisms of $C$ and collection of 2-morphisms of $C$ is a category. Composition operation in $\tau C$ is horizontal composition in $C$. We call category $\tau C$ associated to a strict double category $C$ the \textbf{transversal category associated to} $C$. 

\begin{cor}
Let $C$ be a globularily generated double category. If $C$ is a strict double category, then the transversal category $\tau C$ associated to $C$ is equal to the limit $\varinjlim H^C_n$, in the underlying category of \textbf{Cat}, of the sequence of horizontal categories associated to $C$.
\end{cor}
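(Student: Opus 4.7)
The plan is to mirror, on the transversal side, the argument already carried out in Lemma 4.1. Since the preceding discussion observes that in the strict case each $H^C_n$ genuinely is a category (objects being vertical morphisms of $C$, composition being horizontal composition), each $H^C_n$ is a subcategory of $\tau C$, and the chain of inclusions $H^C_n \hookrightarrow H^C_{n+1}$ gives a filtered diagram in \textbf{Cat} whose colimit $\varinjlim H^C_n$ is a subcategory of $\tau C$. Because all of the $H^C_n$ share the same object collection (the vertical morphisms of $C$), the underlying object collection of $\varinjlim H^C_n$ coincides with that of $\tau C$ automatically, so the content of the corollary is the equality of morphism collections.

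Next I would exploit the sandwich inclusions
\[
\operatorname{Hom} V^C_n \subseteq H^C_{n+1} \subseteq \operatorname{Hom} V^C_{n+1}
\]
that were recorded between the definition of the vertical and horizontal sequences and Lemma 4.1. Taking the union over all $n$ on both ends of this chain yields the set-theoretic equality
\[
\bigcup_{n=1}^\infty \operatorname{Hom} H^C_n \;=\; \bigcup_{n=1}^\infty \operatorname{Hom} V^C_n.
\]

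Finally I would invoke Lemma 4.1: since $C$ is globularily generated, $C_1 = \varinjlim V^C_n$, which is to say that the right-hand union is exactly the collection of all 2-morphisms of $C$, i.e.\ the morphism collection of $\tau C$. Combined with the identification of object collections and the fact that composition in each $H^C_n$ is by definition horizontal composition in $C$ (hence composition in $\tau C$), this forces $\varinjlim H^C_n = \tau C$.

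I do not expect a real obstacle here; the work was already done in proving Lemma 4.1 and in establishing that the $H^C_n$ are subcategories of $\tau C$ in the strict case. The only point that needs a brief justification is that the filtered colimit of the $H^C_n$ in \textbf{Cat} really is the union described above, which is immediate because the transition functors are inclusions and the object collections stabilize at the first step. Strictness of $C$ is essential only insofar as it guarantees that horizontal composition assembles the $H^C_n$ into honest categories in the first place.
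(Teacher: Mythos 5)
Your proposal is correct and follows essentially the same route as the paper's own proof: identify the object collections, then reduce the equality of morphism collections to $\bigcup_n \operatorname{Hom} H^C_n = \bigcup_n \operatorname{Hom} V^C_n$ via the sandwich inclusions and conclude by Lemma 4.1. Your explicit justification of that union equality is a detail the paper passes over silently, but the argument is the same.
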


\begin{proof}
Let $C$  be a strict globularily generated double category. We wish to prove, in this case, that the transversal category $\tau C$ associated to $C$ is equal to the limit $\varinjlim H^C_n$ of the sequence of horizontal categories associated to $C$.

By the way the sequence of horizontal categories associated to globularily generated double category $C$ was constructed, it is easily seen that the collection of objects of the $n$-th horizontal category $H^C_n$ associated to $C$ is equal to the collection of vertical morphisms of $C$ for every $n$. It follows, from this, that the collection of objects of the limit $\varinjlim H_n^C$ of the sequence of horizontal categories associated to $C$ is equal to the collection of vertical morphisms of $C$ and is thus equal to the collection of objects of the transversal category $\tau C$ of $C$. The collection of morphisms of the limit $\varinjlim H^C_n$ is equal to the union $\bigcup_{n=1}^\infty\mbox{Hom}H^C_n$ of the collections of morphisms of horizontal categories associated to $C$. This union is equal to the union $\bigcup_{n=1}^\infty\mbox{Hom}V_n^C$ of the collections of morphisms of vertical categories associated to $C$, which by lemma 4.1 is equal to the collection of 2-morphisms of $C$. This concludes the proof.
\end{proof}

\begin{definition}
Let $C$ be a globularily generated double category. Let $\Phi$ be a 2-morphism in $C$. We call the minimal integer $n$ such that $\Phi$ is a morphism of the $n$-th vertical category $V^C_n$ associated to $C$ the vertical length of $\Phi$.
\end{definition}

\noindent We now apply the concept of vertical length to the proof of results concerning the structure of globularily generated double categories. We first establish notational conventions.

\

\noindent Assuming a double category $C$ is strict, the horizontal composition $\Phi_k\ast...\ast\Phi_1$ of any composable sequence $\Phi_1,...,\Phi_k$ of 2-morphisms in $C$, is unambiguously defined. This is not the case in general. If a double category $C$ is not assumed to be strict, then the horizontal compositions of a composable sequence $\Phi_1,...,\Phi_k$ of 2-morphisms in $C$, following different parentheses patterns, might yield different 2-morphisms. If a 2-morphism $\Phi$ in a double category $C$ can be obtained as the horizontal composition, following a certain parentheses pattern, of composable sequence of 2-morphisms $\Phi_1,...,\Phi_k$, we will write $\Phi\equiv\Phi_k\ast...\ast\Phi_1$. 

\

\noindent Given 2-morphisms $\Phi$ and $\Psi$ in a double category $C$, we say that $\Phi$ and $\Psi$ are \textbf{globularily equivalent} if there exist globular 2-isomorphisms $\Theta_1,\Theta_2$ in $C$ such that the equation $\Phi=\Theta_1\Psi\Theta_2^{-1}$ holds.

From the fact that associators in double cateogries satisfy the pentagon axiom and from the fact that the collection of globular 2-morphisms of any double category is closed under the operations of taking vertical and horizontal composition, it follows that if two 2-morphisms $\Phi$ and $\Psi$ satsify equation $\Phi,\Psi\equiv\Phi_k\ast...\ast\Phi_1$ for a composable sequence of 2-morphisms $\Phi_1,...,\Phi_k$ in $C$, then $\Phi$ and $\Psi$ are globularily equivalent.

\

\noindent Finally, we will say that a 2-morphism $\Phi$ in a double category $C$ is a \textbf{horizontal endomorphism} if the source and target $s\Phi,t\Phi$ of $\Phi$, are equal. Horizontal identities are examples of horizontal endomorphisms. The next proposition says that a 2-morphism in a globular double category is either globular or a horizontal endomorphisms.

\begin{prop}
Let $C$ be a globularily generated double category. Let $\Phi$ be a 2-morphism in $C$. If $\Phi$ is not globular then $\Phi$ is a horizontal endomorphism.
\end{prop}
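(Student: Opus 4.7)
The plan is to prove the stronger statement that every 2-morphism of $C$ satisfies the property $P(\Phi)$: ``$\Phi$ is globular or $\Phi$ is a horizontal endomorphism''. Since globular 2-morphisms trivially satisfy $P$, this is equivalent to the proposition. The strategy is to use Lemma 4.1 to reduce to showing $P$ holds on each $V^C_n$, and then induct on $n$. The inductive step will rest on two closure properties: $P$ is preserved under vertical composition in $C_1$ and under horizontal composition $\ast$.

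For vertical composition, functoriality of $s,t \colon C_1 \to C_0$ gives $s(\Psi \circ \Phi) = s\Psi \cdot s\Phi$ and $t(\Psi \circ \Phi) = t\Psi \cdot t\Phi$. A case analysis on whether each of $\Phi$ and $\Psi$ is globular or a non-globular horizontal endomorphism shows the composite satisfies $P$: the ``both globular'' and ``both horizontal endomorphisms'' cases are immediate, and in the mixed cases the condition that the shared horizontal morphism $k$ satisfy $s(k) = t(k)$ (forced by whichever factor is the non-globular horizontal endomorphism) collapses the 0-cells involved in the globular factor, so the globular piece is in fact also a horizontal endomorphism with source and target $\mathrm{id}_a$ for a single 0-cell $a$, and the composite then has source equal to target. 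For horizontal composition, composability $t\Phi = s\Phi'$ eliminates the mixed case entirely, since an identity vertical morphism cannot equal a non-identity one; the remaining cases produce either a globular composite or a horizontal endomorphism whose source and target are both the common vertical morphism at the seam.

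The base case is that $H^C_1$ consists of globular 2-morphisms (trivially satisfying $P$) and horizontal identities $i(u)$ of vertical morphisms $u$ (for which $s(i(u)) = t(i(u)) = u$, hence horizontal endomorphisms); by vertical closure, $P$ extends to all of $V^C_1$. The inductive step is now formal: $H^C_n$ consists of horizontal composites of elements of $V^C_{n-1}$, which by the horizontal closure property satisfy $P$; then $V^C_n$ inherits $P$ by vertical closure. For non-strict $C$, different parenthesizations of a horizontal composition produce globularly equivalent 2-morphisms, which share source and target vertical morphisms (since globular 2-isomorphisms have identity source and target), so the choice of parenthesization is immaterial for $P$. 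Applying Lemma 4.1 now concludes the argument.

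The main obstacle is the case analysis for vertical closure, especially the verification that the mixed ``globular plus non-globular horizontal endomorphism'' cases cannot produce 2-morphisms outside $P$. This relies on the hidden compatibility constraints imposed by composability, which force the 0-cells involved in the globular factor to coincide; once this is checked, the remainder of the induction is routine.
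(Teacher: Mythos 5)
Your proposal is correct and follows essentially the same route as the paper: an induction along the vertical filtration $\{V^C_n\}$ via Lemma 4.1, with the same three key observations --- horizontal identities are horizontal endomorphisms, functoriality of $s$ and $t$ gives closure of the property under vertical composition, and the composability condition $t\Phi = s\Phi'$ at the seam rules out horizontally composing a globular 2-morphism with a non-globular horizontal endomorphism. The only cosmetic difference is that you package the case analysis into two binary closure lemmas, whereas the paper runs nested inductions on the lengths of minimal vertical and horizontal decompositions; the content is the same.
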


\begin{proof}
Let $C$ be a globularily generated double category. Let $\Phi$ be a non-globular 2-morphism in $C$. We wish to prove that $\Phi$ is a horizontal endomorphism.

We proceed by induction on the vertical length of $\Phi$. Suppose first that $\Phi$ is an element of $H^C_1$. In that case, by the assumption that $\Phi$ is non-globular, $\Phi$ must be the horizontal identity of a vertical morphism in $C$ and thus must be a horizontal endomorphism. Suppose now that $\Phi$ is a general element of the first vertical category $V^C_1$ associated to $C$. Write $\Phi$ as a vertical composition $\Phi=\Phi_k\circ...\circ\Phi_1$ where $\Phi_i$ is an element of $H^C_1$ for every $i$. Moreover, assume that the length $k$ of this decomposition is minimal. We prove by induction on $k$ that $\Phi$ must be a horizontal endomorphism. Suppose first that $k=1$. In that case $\Phi$ is an element of $H^C_1$ and is thus a horizontal identity. Suppose now that $k$ is strictly greater than 1 and that the result is true for every 2-morphism in the first vertical category $V^C_1$ associated to $C$ that can be written as a vertical composition of strictly less than $k$ 2-morphisms in $H^C_1$. Write $\Psi$ for composition $\Phi_k\circ...\circ\Phi_2$. In this case equation $\Phi=\Psi\circ \Phi_1$ holds. Now, since the collection of globular 2-morphisms of $C$ is closed under the operation of taking vertical composition, one of $\Psi$ and $\Phi_1$ is not globular. If both $\Psi$ and $\Phi_1$ are not globular, then by induction hypothesis both $\Psi$ and $\Psi_1$ are horizontal endomorphisms and thus their vertical composition $\Phi$ is a horizontal endomorphism. Suppose now that $\Psi$ is globular. In that case $\Phi_1$ is a horizontal endomorphism. Now, from the fact that source and target of $\Psi$ are in this case vertical identities and from the fact that source and target are functorial, equations $s\Phi=s\Phi_1$ and $t\Phi=t\Psi_1$ follow and thus $\Phi$ is a horizontal endomorphism. The case in which $\Phi_1$ is globular is handled analogously. This concludes the base of the induction.

Let $n$ be a positive integer strictly greater than 1. Assume now that every non-globular 2-morphism in $C$ of vertical length strictly less than $n$ is a horizontal endomorphism. Suppose first that $\Phi$ is an element of $H^C_n$. Let $\Phi_k\ast...\ast\Phi_1$ represent a horizontal composition in $C$ such that $\Phi_i$ is a morphism of $V^C_{n-1}$ for each $i$ and such that $\Phi\equiv\Phi_k\ast...\ast\Phi_1$. Suppose that the length $k$ of this decomposition is minimal. We proceed by induction over $k$. If $k=1$ then $\Phi$ is an element of $V^C_{n-1}$ and is thus a horizontal endomorphism by induction hypothesis. Suppose now that $k$ is strictly greater than 1 and that the result is true for every non-globular 2-morphism in $H^C_n$ that can be written as a horizontal composition of strictly less than $k$ 2-morphisms in $V^C_{n-1}$. Choose $\Psi$ such that $\Psi\equiv\Phi_k\ast...\ast\Phi_2$. In this case $\Phi$ and $\Psi\ast\Phi_1$ are globularily equivalent and thus have the same source and target. Now, if both $\Psi$ and $\Phi_1$ are globular, then their horizontal composition, and every 2-morphism globularily equivalent to it, is globular. We thus assume that one of $\Psi$ and $\Phi_1$ is non-globular. If $\Psi$ is globular, then the equation $t\Phi_1=s\Psi$ together with induction hypothesis implies that $\Phi_1$ is globular. An identical argument implies that if $\Phi_1$ is globular then $\Psi$ is globular. We conclude that both $\Psi$ and $\Phi_1$ are non-globular and thus by induction hypothesis are horizontal endomorphisms. This and equation $t\Phi_1=s\Psi$ implies that $\Psi\ast\Phi_1$ and thus $\Phi$ is a horizontal endomorphism. Assume now that $\Phi$ is a general morphism of $V^C_n$. Write $\Phi$ as a vertical composition $\Phi_k\circ...\circ\Phi_1$ where $\Phi$ is an element of $H^C_n$ for every $k$. Moreover, assume again that the length $k$ of this decomposition is minimal. An induction argument over $k$ together with an argument analogous to that presented in the base of the induction proves that $\Phi$ is a horizontal endomorphism. This concludes the proof.
\end{proof}

\noindent A direct consequence of proposition 4.4 is that neither double categories of the form \textbf{Cob}$(n)$ nor double categories \textbf{Alg} or $[W^*]^f$, presented in section 2 are globularily generated. We interpret this by saying that the condition of a double category being globularily generated is not trivial. We will compute the globularily generated piece of these double categories in section 6. This will provide non-trivial examples of globularily generated double categories. The following corollary follows immediately from the previous proposition.

\begin{cor}
Let $C$ be a globularily generated double category. Let $\Phi$ and $\Psi$ be 2-morphisms in $C$. Suppose $\Phi$ and $\Psi$ are composable. In that case horizontal composition $\Psi\ast\Phi$ is globular if and only if $\Phi$ and $\Psi$ are both globular.
\end{cor}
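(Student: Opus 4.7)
The plan is to split the biconditional into its two directions and let Proposition 4.4 do the work in the non-trivial one. For the ``if'' direction, I would observe that if $\Phi$ and $\Psi$ are both globular then $s\Phi$, $t\Phi$, $s\Psi$, $t\Psi$ are all vertical identity endomorphisms; applying the identities $s(\Psi\ast\Phi)=s\Phi$ and $t(\Psi\ast\Phi)=t\Psi$---which come from $\ast$ being a functor on the pullback $C_1\times_{C_0}C_1$ compatible with source and target---the source and target of $\Psi\ast\Phi$ are vertical identities, so $\Psi\ast\Phi$ is globular.

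For the ``only if'' direction, I would proceed by contradiction. Assume $\Psi\ast\Phi$ is globular and that, say, $\Phi$ is not globular. Proposition 4.4 then forces $\Phi$ to be a horizontal endomorphism, so $s\Phi=t\Phi$; and because $\Phi$ is not globular, this common vertical morphism cannot be a vertical identity endomorphism. On the other hand, globularity of $\Psi\ast\Phi$ combined with $s(\Psi\ast\Phi)=s\Phi$ forces $s\Phi$ to be a vertical identity, giving the required contradiction. The argument for $\Psi$ is symmetric, using $t(\Psi\ast\Phi)=t\Psi$, and with both possibilities ruled out we conclude that $\Phi$ and $\Psi$ must both be globular.

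I do not anticipate any real obstacle. The conceptual content is entirely absorbed by Proposition 4.4; what remains is routine source/target bookkeeping in the double category $C$, plus the basic observation that a non-globular 2-morphism has at least one endpoint that is not a vertical identity, so that the equality $s\Phi=t\Phi$ granted by Proposition 4.4 forces both endpoints to be simultaneously non-identity and available for the contradiction above.
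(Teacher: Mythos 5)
Your proposal is correct and matches the paper's intent: the paper states that the corollary ``follows immediately from the previous proposition'' and gives no further argument, and your proof simply spells out that immediate deduction, using Proposition 4.4 together with the source/target identities $s(\Psi\ast\Phi)=s\Phi$ and $t(\Psi\ast\Phi)=t\Psi$. The observation that a non-globular horizontal endomorphism has its (common) source and target equal to a non-identity vertical morphism is exactly the point that makes the ``only if'' direction work.
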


\noindent We conclude this section with the following technical lemma.

\begin{lem}
Let $C$ be a globularily generated double category. Let $\Phi$ be a 2-morphism in $C$. If the vertical length of $\Phi$ is equal to 1 then $\Phi$ can be written as a vertical composition of the form 

\[\Psi_k\circ\Phi_k\circ...\circ\Psi_1\circ\Phi_1\circ\Psi_0\]

\noindent where $\Phi_i$ is a horizontal identity for every $1\leq i\leq k$ and $\Psi_i$ is globular for every $0\leq i\leq k$.
\end{lem}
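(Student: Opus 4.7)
The plan is to induct on the length of a decomposition of $\Phi$ as a vertical composition of elements of $H^C_1$. Since the vertical length of $\Phi$ equals one, $\Phi$ is a morphism in $V^C_1$, and so can be expressed as $\Phi=\alpha_m\circ\cdots\circ\alpha_1$ with each $\alpha_i\in H^C_1$; that is, each $\alpha_i$ is either globular or the horizontal identity of a vertical morphism of $C$. (The degenerate case in which $\Phi$ is an identity 2-morphism of a horizontal morphism is globular and falls under the base case below.)

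For the base case $m=1$, I split into two subcases. If $\alpha_1$ is globular, take $k=0$ and set $\Psi_0=\alpha_1$. If $\alpha_1$ is the horizontal identity of a vertical morphism, take $k=1$, $\Phi_1=\alpha_1$, and let $\Psi_0$ and $\Psi_1$ be the identity 2-morphisms of the source and target horizontal morphisms of $\Phi_1$ respectively; these are globular, since the identity 2-morphism of any horizontal morphism has identity source and target vertical morphisms.

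For the inductive step with $m\geq 2$, write $\Phi=\alpha_m\circ\Phi'$ where $\Phi'=\alpha_{m-1}\circ\cdots\circ\alpha_1$, and apply the induction hypothesis to $\Phi'$ to obtain $\Phi'=\Psi_k\circ\Phi_k\circ\cdots\circ\Phi_1\circ\Psi_0$ in the desired form. If $\alpha_m$ is globular, then $\alpha_m\circ\Psi_k$ is globular (vertical composition preserves globularity), and $\Phi=(\alpha_m\circ\Psi_k)\circ\Phi_k\circ\cdots\circ\Psi_0$ has the required shape with the same $k$. If instead $\alpha_m$ is a horizontal identity of a vertical morphism, prepend the identity 2-morphism of the top horizontal morphism of $\alpha_m$; this identity 2-morphism is globular and leaves the vertical composition unchanged, producing the required form with $k$ replaced by $k+1$.

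The argument is essentially bookkeeping, and there is no serious obstacle. The only facts needed beyond the constructions of Section 4 are that the collection of globular 2-morphisms is closed under vertical composition and that the identity 2-morphism of any horizontal morphism is globular; both are immediate from the definitions. The content of the lemma is therefore a normalization statement: an arbitrary element of $V^C_1$ can always be rearranged so that its non-globular factors are precisely horizontal identities of vertical morphisms, isolated between globular layers.
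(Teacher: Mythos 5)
Your proof is correct and follows essentially the same route as the paper's: decompose $\Phi$ as a vertical composition of elements of $H^C_1$, induct on the length of that decomposition, absorb a globular top factor into $\Psi_k$, and append a horizontal-identity top factor together with an identity $2$-morphism of the appropriate horizontal identity as the new globular layer. The base case subcases also match the paper's exactly.
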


\begin{proof}
Let $C$ be a globularily generated double category. Let $\Phi$ be a 2-morphism in $C$. Suppose that the vertical length of $\Phi$ is equal to 1. We wish to prove, in this case, that $\Phi$ admits a decomposition as described in the statement of the lemma.

Suppose first that $\Phi$ is an element of $H^C_1$. In that case $\Phi$ is either globular or $\Phi$ is the horizontal identity of a vertical morphism in $C$. Suppose first that $\Phi$ is globular. In that case make $k=0$ and $\Psi_0=\Phi$. Suppose now that $\Phi$ is the horizontal identity of a vertical morphism $\alpha$ in $C$, with domain and codomain $x$ and $y$ respectively. In that case make $k=1$, make $\Psi_0$ to equal to the identity 2-morphism of the horizontal identity of $x$, make $\Phi_1$ to be equal to $\Phi$ and make $\Psi_1$ to be equal to the identity 2-morphism of horizontal identity of $y$.

Suppose now that $\Phi$ is a general morphism of the first vertical category $V_1^C$ associated to $C$. Write $\Phi$ as the vertical composition $\Phi=\Theta_m\circ...\circ\Theta_1$, where $\Theta_i$ is an element of $H^C_1$ for every $i$. Choose this decomposition in such a way that its length $m$ is minimal. We proceed by induction on $m$. In the case in which $m$ is equal to 1 $\Phi$ is an element of $H^C_1$. Suppose now that $m$ is strictly greater than 1 and that the result is true for every 2-morphism in $V^C_1$ that can be written as a vertical composition of strictly less than $m$ elements of $H^C_1$. Write $\Psi$ for vertical composition $\Theta_{m-1}\circ...\circ\Theta_1$. In that case $\Psi$ admits a decomposition as

\[\Psi=\Psi_k\circ\Phi_k\circ...\circ\Psi_1\circ\Phi_1\circ\Psi_0\]

\noindent for some $k$, where $\Phi_i$ is a horizontal identity for every $1\leq i\leq k$ and $\Psi_i$ is globular for every $0\leq i\leq k$. Since $\Theta_m$ is an element of $H^C_1$ then it is either globular or it is the horizontal identity of a vertical morphism in $C$. Suppose first that $\Theta_m$ is globular. In that case write $\Psi_k'$ for vertical composition $\Theta_m\circ\Psi_k$. In that case the decomposition

\[\Phi=\Psi_k'\circ\Phi_k\circ...\circ\Psi_1\circ\Phi_1\circ\Psi_0\]

\noindent satisfies the conditions of the lemma. Suppose now that $\Theta_m$ is the vertical identity of a vertical morphism $\alpha$, with domain and codomain $x$ and $y$ respectively. In that case write $\Phi_{k+1}$ for $\Theta_m$ and write $\Psi_{k+1}$ for the identity 2-endomorphism of the identity horizontal endomorphism of $x$. In that case the decomposition

\[\Phi=\Psi_{k+1}\circ\Phi_{k+1}\circ...\circ\Psi_1\circ\Phi_1\circ\Psi_0\]

\noindent satisfies the conditions of the lemma. This concludes the proof.
\end{proof}

\section{Categorification}

\noindent In this section we present an extension of the definition of the vertical filtration of a globularily generated double category presented in the previous section, to a filtration on the globularily generated piece funtor. We begin with the following lemma.

\

\begin{lem}
Let $C$ and $D$ be globularily generated double categories. Let $F:C\to D$ be a double functor from $C$ to $D$. Let $n$ be a positive integer. The image of the $n$-th vertical category $V_n^C$ associated to $C$, under morphism functor $F_1$ of $F$, is contained in the $n$-th vertical category $V_n^{D}$ associated to $D$. Moreover, in the case in which $C,D$, and $F$ are strict, the image of the $n$-th horizontal category $H_n^C$ associated to $C$, under morphism functor $F_1$ of $F$, is contained in the $n$-th horizontal category $H_n^{D}$ associated to $D$.
\end{lem}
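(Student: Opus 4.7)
The plan is to prove both statements by induction on $n$, exploiting two facts about a double functor $F$: first, that its morphism functor $F_1$ preserves vertical composition (functoriality on $C_1$), and second, that because $F$ commutes with the structure functors $s,t,i$ and with horizontal composition $\ast$ (strictly for object-level data, and up to the globular components of the identity and associator transformations on 2-cells), $F_1$ sends horizontal identities to horizontal identities, globular 2-morphisms to globular 2-morphisms, and horizontal composites to horizontal composites (following the same parentheses pattern, again modulo globular 2-isomorphisms).

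For the base case $n=1$, I would argue that $F_1(H_1^C)\subseteq H_1^D$ directly: a horizontal identity $i_\alpha$ of a vertical morphism $\alpha$ is sent to $i_{F_0\alpha}$, which is again a horizontal identity; and a globular 2-morphism is sent, by the preceding remark, to a globular 2-morphism. Since $V_1^C$ is by definition the subcategory of $C_1$ generated by $H_1^C$ under vertical composition, every morphism of $V_1^C$ can be written as $\Theta_m\circ\cdots\circ\Theta_1$ with $\Theta_i\in H_1^C$, and by functoriality of $F_1$ its image equals $F_1\Theta_m\circ\cdots\circ F_1\Theta_1$, which lies in $V_1^D$.

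For the inductive step, assume $F_1(V_{n-1}^C)\subseteq V_{n-1}^D$. An element of $H_n^C$ is, up to globular 2-isomorphisms induced by associators, a horizontal composite $\Phi_k\ast\cdots\ast\Phi_1$ of 2-morphisms $\Phi_i$ in $V_{n-1}^C$. Applying $F_1$ yields a 2-morphism globularily equivalent to $F_1\Phi_k\ast\cdots\ast F_1\Phi_1$, and by the inductive hypothesis each $F_1\Phi_i$ lies in $V_{n-1}^D$; since globular 2-isomorphisms in $D$ belong already to $V_1^D\subseteq V_n^D$, the composite lies in $V_n^D$. Hence $F_1(H_n^C)\subseteq V_n^D$, and the same vertical-composition argument as in the base case then gives $F_1(V_n^C)\subseteq V_n^D$.

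The strict case is a direct specialization: when $C$, $D$, and $F$ are strict, horizontal composites are unambiguous and $F_1$ commutes strictly with $\ast$, so the inductive step above shows $F_1(H_n^C)\subseteq H_n^D$ without any appeal to globular equivalences. The only subtlety I expect is bookkeeping in the non-strict case, namely making sure that the globular 2-isomorphisms arising from associators when reparenthesizing a horizontal composite are absorbed inside $V_n^D$; this is handled by the observation that all such globular correctors already sit in $V_1^D$, and $V_1^D\subseteq V_n^D$ for all $n\geq 1$.
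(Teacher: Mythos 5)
Your argument is essentially the paper's own proof: induction on $n$, with horizontal identities and globular 2-morphisms handling $H_1^C$, functoriality of $F_1$ under vertical composition handling the passage from $H_n^C$ to $V_n^C$, and globular equivalences absorbing the associator corrections in the non-strict inductive step. One small imprecision: for non-strict $F$ the image $F_1(i_\alpha)$ is only globularily conjugate to $i_{F_0\alpha}$, so in the base case you get $F_1(H_1^C)$ contained in the morphisms of $V_1^D$ rather than in $H_1^D$ itself — which is all the lemma requires outside the strict case, so the proof is unaffected.
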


\begin{proof}
Let $C$ and $D$ be globularily generated double categories. Let $F:C\to D$ be a double functor from $C$ to $D$. Let $n$ be a positive integer. We wish to prove that the image of the $n$-th vertical category $V_n^C$ associated to $C$, under morphism functor $F_1$ of $F$, is a subcategory of the $n$-th vertical category $V_n^{D}$ associated to $D$. Moreover, we wish to prove that if $C,D$, and $F$ are all strict then the image of the $n$-th horizontal category $H_n^C$ associated to $C$, under morphism functor $F_1$ of $F$, is a subcategory of the $n$-th horizontal category $H_n^{D}$ associated to $D$.

We proceed by induction on $n$. Let $\Phi$ be a 2-morphism in the first vertical category $V_1^C$ associated to $C$. We wish to prove, in this case that $F_1\Phi$ is a morphism in the first vertical category $V_1^{D}$ associated to $D$. Suppose first that $\Phi$ is an element of $H_1^C$. In that case $\Phi$ is either globular or the horizontal identity of a vertical morphism in $C$. Suppose first that $\Phi$ is the horizontal identity of a vertical morphism $\alpha$ in $C$. In that case the image $F_1\Phi$ of $\Phi$ under functor $F_1$ is globularily conjugate to the horizontal identity of the image $F_0\alpha$ of $\alpha$ under functor $F_0$, and is thus a morphism in category $V_1^{D}$. Observe that in the case in which double functor $F$ is strict $F_1\Phi$ is precisely the horizontal identity of vertical morphism $F_0\alpha$ and is thus an element of $H^D_0$. From this and from the fact that collection of globular 2-morphisms of a double category is invariant under the application of double functors it follows that the image of collection $H_1^C$, under morphism functor $F_1$, is contained in collection of morphisms of first vertical category $V_1^{D}$ of $D$. Moreover, in the case in which $F$ is strict, the image of $H_1^C$ under $F_1$ is contained in $H_1^{D}$. Suppose now that $\Phi$ is a general element of the first vertical category $V_1^C$ associated to $C$. Write $\Phi$ as a vertical composition

\[\Phi=\Phi_k\circ...\circ\Phi_1\]

\noindent where $\Phi_i$ is an element of $H_1^C$ for every $1\leq i\leq k$. In that case the image of $\Phi$ under morphism functor $F_1$ of $F$ is equal to vertical composition 

\[F_1\Phi_k\circ...\circ F_1\Phi_1\]

\noindent which is a morphism of the first vertical category $V_1^{D}$ associated to $D$. Thus the image of the first vertical category $V_1^C$ associated to $C$, under morphism functor $F_1$ of $F$ is a subcategory of the first vertical category $V_1^{D}$ associated to $D$. Moreover, if we assume that $C,D$, and $F$ are strict, $H_1^{C}$ and $H_1^{D}$ are categories, and the image of $H_1^C$ under morphism functor $F_1$ of $F$ is a subcategory of $H_1^{D}$.

Let $n$ now be strictly greater than 1. Suppose that the conclusions of the proposition are true for every $m<n$. Let $\Phi$ now be a morphism in the $n$-th vertical category $V_n^C$ associated to $C$. We wish to prove in this case that the image $F_1\Phi$ of $\Phi$ under morphism functor $F_1$ of $F$ is a morphism in the $n$-th vertical category $V_n^{D}$ associated to $D$. Suppose first that $\Phi$ is a morphism in $H_n^C$. Write $\Phi$, up to globular equivalences, as a horizontal composition of the form

\[\Phi\equiv\Phi_k\ast...\ast\Phi_1\]

\noindent where $\Phi_i$ is an element of the $n-1$-th vertical category $V^C_{n-1}$ associated to $C$ for every $1\leq i\leq k$. In that case the image $F_1\Phi$ under functor $F_1$ is globularily equivalent to any possible interpretation of horizontal composition

\[F_1\Phi_k\ast...\ast F_1\Phi_1\]

\noindent in $D$. By induction hypothesis $F_1\Phi_i$ is a morphism of the $n-1$-th vertical category $V_{n-1}^{D}$ associated to $D$ for every $1\leq i\leq k$ and thus any interpretation of the horizontal composition above is a morphism of the $n$-th horizontal cateogory $V_n^{D}$ associated to $D$. We conclude that the image $F_1\Phi$ of 2-morphism $\Phi$ under functor $F_1$ is a morphism in the $n$-th vertical category $V_n^{D}$ associated to $D$. Moreover, if $F$ is strict then image $F_1\Phi$ of $\Phi$ under functor $F_1$ is an element of $H_n^{D}$. Suppose now that $\Phi$ is a general morphism of the $n$-th vertical cateogry $V_n^C$ associated to $C$. Write $\Phi$ as a vertical composition of the form

\[\Phi=\Phi_k\circ...\circ\Phi_1\]

\noindent where $\Phi_i$ is an element of $H^C_n$ for every $1\leq i\leq k$. In that case the image $F_1\Phi$ of $\Phi$ under morphism functor $F_1$ of $F$ is equal to vertical composition

\[F_1\Phi_k\circ...\circ F_i\Phi_1\]

\noindent in $D$ and thus is an element of $n$-th vertical category $V_n^{D}$ of $D$. We conclude that the image of the $n$-th vertical category $V_n^C$ associated to $C$, under morphism functor of double functor $F$, is a subcategory of the $n$-th vertical category $V_n^{D}$ associated to $D$ and that if $C,D$, and $F$ are strict then moreover the image of the $n$-th horizontal category $H_n^C$ associated to $C$, under morphism functor $F_1$ of $F$, is a subcategory of the $n$-th horizontal category $H_n^{D}$ associated to $D$. This concludes the proof.
\end{proof}

\noindent Let $C$ and $D$ be globularily generated double categories. Let $F:C\to D$ be a double functor from $C$ to $D$. Let $n$ be a positive integer. We write $V_n^F$ for restriction, to the $n$-th vertical category $V_n^C$ associated to $C$, of morphism functor $F_1$ of $F$. Thus defined $V_n^F$ is, by lemma 5.1, a functor from the $n$-th vertical category $V_n^C$ associated to $C$ to the $n$-th vertical category $V_n^{D}$ associated to $D$. We call $V_n^F$ the $n$\textbf{-th vertical functor associated to} $F$. 

The pair formed by the function associating the $n$-th vertical category $V_n^C$ associated to $C$ to every double category $C$ and the $n$-th vertical double functor $V_n^F$ to every double functor $F$ forms a functor from the underlying category of 2-category \textbf{gCat} of globularily generated double categories, double functors, and globularily generated double natural transformations to the underlying category of 2-cateogory \textbf{Cat} of categories, functors, and natural transformations. We denote this functor by $V_n$. We call $V_n$ the $n$-\textbf{th vertical functor}.

\

\noindent Denote by $\pi_0$ and $\pi_1$ the 2-functors from \textbf{dCat} to \textbf{Cat} such that for every double category $C$, $\pi_0C$ and $\pi_1C$ are equal to the object category $C_0$ of $C$ and to the morphism category $C_1$ of $C$ respectively, such that for every double functor $F$, $\pi_0F$ and $\pi_1F$ are equal to the object functor $F_0$ of $F$ and to the  morphism functor $F_1$ of $F$ respectively, and finally, such that for every double natural transformation $\eta$, $\pi_0\eta$ and $\pi_1\eta$ are equal to the object natural transformation $\eta_0$ associated to $\eta$ and to the morphism natural transformation $\eta_1$ associated to $\eta$ respectively. We call $\pi_0$ and $\pi_1$ the \textbf{object projection} and the \textbf{morphism projection} 2-functors of \textbf{dCat} respectively. We keep denoting by $\pi_0$ and $\pi_1$ restrictions of object and morphism projections of 2-category \textbf{dCat}, to sub 2-category \textbf{gCat}. 

\

\noindent Given a double category $C$ and positive integers $m$ and $n$ such that $n\geq m$, the $m$-th vertical category $V_m^{\gamma C}$ associated to the globularily generated piece $\gamma C$ of $C$ is a subcategory of the $n$-th vertical category $V_n^{\gamma C}$ associated to $\gamma C$. We write $\eta_{m,n}^\gamma C$ for the inclusion functor of category $V_m^{\gamma C}$ in $V_n^{\gamma C}$.

Observe that given a double functor $F:C\to D$ from a double category $C$ to a double category $D$, the fact that vertical functors $V_m^{\gamma F}$ and $V_n^{\gamma F}$ associated to double functor $\gamma F$ are restrictions of the morphism functor $\gamma F_1$ of $\gamma F$, implies that the square:

\begin{center}

\begin{tikzpicture}
\matrix(m)[matrix of math nodes, row sep=3.6em, column sep=6em,text height=1.5ex, text depth=0.25ex]
{V_m^{\gamma C}&V_m^{\gamma D}\\
V_n^{\gamma C}&V_n^{\gamma D}\\};
\path[->,font=\scriptsize,>=angle 90]
(m-1-1) edge node[auto] {$V_m^{\gamma F}$} (m-1-2)
        edge node[left] {$\eta_{m,n}^{\gamma C}$} (m-2-1)
(m-2-1) edge node[below]{$V_n^{\gamma F}$} (m-2-2)
(m-1-2) edge node[right] {$\eta_{m,n}^{\gamma D}$} (m-2-2);
\end{tikzpicture}

\end{center}

\noindent commutes. That is, if we denote by $\eta_{m,n}$ the collection of inclusions $\eta_{m,n}^{\gamma C}$ with $C$ running through collection of double categories, then $\eta_{m,n}$ is a natural transformation from the composition $V_m\gamma$ of the globularily generated piece 2-functor $\gamma$ and the $m$-th vertical functor $V_m$, to the composition $V_n\gamma$ of the globularily generated piece 2-functor $\gamma$ and the $n$-th vertical functor $V_n$. The sequence formed by functors $V_n\gamma$ together with the collection formed by natural transformations $\eta_{m,n}$ forms a diagram in the underlying category of 2-category \textbf{Cat} with base in the underlying category of 2-category \textbf{dCat}. 

\

\noindent We will write $\gamma_1$ for the composition $\pi_1\gamma$ of the globularily generated piece 2-functor $\gamma$ and the morphism projection $\pi_1$. The following proposition says that the limit of the diagram above is functor $\gamma_1$. Its proof follows directly from lemma 4.1.

\begin{prop}
Functor $\gamma_1$ defined above is equal to the limit $\varinjlim V_n\gamma$ of diagram formed by sequence of vertical functors $V_n\gamma$ and collection natural transformations $\eta_{m,n}$.
\end{prop}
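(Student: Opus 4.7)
The plan is to verify the equality of the two functors $\gamma_1$ and $\varinjlim V_n \gamma$ pointwise, separately on objects (double categories) and on morphisms (double functors and double natural transformations), exploiting the fact that both are constructed by restriction inside \textbf{dCat}.

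First I would fix a double category $C$ and invoke Lemma 4.1, applied not to $C$ itself but to the globularily generated piece $\gamma C$, which is by construction a globularily generated double category. Lemma 4.1 then yields $(\gamma C)_1 = \varinjlim V_n^{\gamma C}$ in the underlying category of \textbf{Cat}. Unfolding the notation, $\gamma_1 C = \pi_1 \gamma C = (\gamma C)_1$ and $V_n \gamma C = V_n^{\gamma C}$, with cocone maps given by the inclusions $\eta_{m,n}^{\gamma C} : V_m^{\gamma C} \hookrightarrow V_n^{\gamma C}$. This establishes the identification at the level of objects, together with the canonical cocone structure.

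Next, for a double functor $F : C \to D$, I would check that $\gamma_1 F = (\gamma F)_1$ coincides with the unique morphism between colimits induced by the compatible family $V_n^{\gamma F}$. Two observations suffice: by construction, $V_n^{\gamma F}$ is the restriction of $(\gamma F)_1$ to $V_n^{\gamma C}$, so the family $\{V_n^{\gamma F}\}$ is compatible with the cocone maps (this is exactly the commutative square displayed just before the proposition, which in turn rests on Lemma 5.1); and by the object-level identification above every morphism of $(\gamma C)_1$ lies in some $V_n^{\gamma C}$, so the family $\{V_n^{\gamma F}\}$ determines $(\gamma F)_1$ uniquely. An analogous, entirely formal, check handles double natural transformations, since the components of $\gamma \mu$ are again described by the same restriction procedure.

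Honestly, the only substantive input is Lemma 4.1; everything else is bookkeeping about how the 2-functors $\gamma$, $V_n$, and $\pi_1$ have all been defined by restriction and thus commute with the filtration. There is no real obstacle to anticipate: the proposition is the expected functorial packaging of the filtration statement of Lemma 4.1, combined with the already verified naturality of $\eta_{m,n}$ with respect to $\gamma$.
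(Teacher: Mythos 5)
Your proposal is correct and follows essentially the same route as the paper, which simply asserts that the proposition ``follows directly from lemma 4.1''; you apply Lemma 4.1 to the globularily generated double category $\gamma C$ to get the pointwise identification $(\gamma C)_1=\varinjlim V_n^{\gamma C}$ and then verify the (routine) compatibility on double functors and natural transformations via the restriction definitions and the naturality of $\eta_{m,n}$. The only difference is that you spell out the bookkeeping the paper leaves implicit.
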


\noindent Let now $C$ and $D$ be strict globularily generated double categories and let $F:C\to D$ be a strict double functor from $C$ to $D$. If $n$ is a positive integer, then from the assumption that $C,D$, and $F$ are strict, and from lemma 5.1, it follows that the pair formed by morphism function of object functor $F_0$ of $F$ and morphism function of morphism functor $F_1$ of $F$ restrict to a functor from the $n$-th horizontal category $H_n^C$ associated to $C$ to the $n$-th horizontal category $H_n^D$ associated to $D$. We denote this functor by $H_n^F$ and we call it the $n$-\textbf{th horizontal functor associated to} $F$. 

\

\noindent Denote by $\overline{\textbf{dCat}}$ the sub 2-category of \textbf{dCat} generated by collection of strict double categories and collection of strict double functors between them, and denote by $\overline{\mbox{\textbf{gCat}}}$ the sub 2-category of $\overline{\textbf{dCat}}$ generated by collection of strict globularily generated double categories. Given a positive integer $n$, the pair of functions associating, for every strict globularily generated double category $C$ the $n$-th horizontal category $H_n^C$ associated to $C$, and to every strict double functor $F$, the $n$-th horizontal functor $H_n^F$ associated to $F$, is a functor from the underlying category of 2-category $\overline{\mbox{\textbf{gCat}}}$, to the underlying category of 2-category \textbf{Cat} of categories, functors, and natural transformations.  

\

\noindent Given a strict double category $C$, we denoted, in section 4, by $\tau C$ the category whose collection of objects is collection of vertical morphisms of $C$ and whose collection of morphisms is collection of 2-morphisms of $C$. We called category $\tau C$ the \textbf{transversal category associated to} $C$. Given a strict double functor $F:C\to D$ from a strict double category $C$ to a strict double category $D$, we denote by $\tau F$ the functor from the transversal category $\tau C$ associated to $C$ to the transversal category $\tau D$ associated to $D$, such that object and morphism functions of $\tau F$ are the morphism function of object functor $F_0$ associated to $F$ and the morphism function of morphism functor $F_1$ associated to $F$ respectively. We call $\tau F$ the \textbf{transversal functor associated to} $F$. 

The pair of functions associating the transversal category $\tau C$ to a double category $C$ and the transversal functor $\tau F$ to a double functor $F$ forms a functor from the underlying category of 2-category $\overline{\mbox{\textbf{dCat}}}$ to \textbf{Cat}. We denote this functor by $\tau$. We call $\tau$ the \textbf{transversal category functor}. We keep denoting by $\tau$ the restriction of the transversal functor to the underlying category of 2-category $\overline{\mbox{\textbf{gCat}}}$.

\

\noindent Given a strict globularily generated double category $C$ and positive integers $m$ and $n$ such that $m\leq n$, the $m$-th horizontal category $H_m^C$ associated to $C$ is a subcategory of the $n$-th horizontal category $H_n^C$ associated to $C$. In this case denote by $\nu^C_{m,n}$ the inculsion functor of $H_m^C$ in $H_n^C$.

Given a strict double functor $F:C\to D$, from a strict double category $C$ to a strict double category $D$, for every pair of positive integers $m$ and $n$ such that $m\leq n$, the diagram

\begin{center}

\begin{tikzpicture}
\matrix(m)[matrix of math nodes, row sep=3.6em, column sep=6em,text height=1.5ex, text depth=0.25ex]
{H_m^{\gamma C}&H_m^{\gamma D}\\
H_n^{\gamma C}&H_n^{\gamma D}\\};
\path[->,font=\scriptsize,>=angle 90]
(m-1-1) edge node[auto] {$H_m^{\gamma F}$} (m-1-2)
        edge node[left] {$\nu_{m,n}^{\gamma C}$} (m-2-1)
(m-2-1) edge node[below]{$H_n^{\gamma F}$} (m-2-2)
(m-1-2) edge node[right] {$\nu_{m,n}^{\gamma D}$} (m-2-2);
\end{tikzpicture}

\end{center}

\noindent commutes. That is, if in this case we denote by $\nu_{m,n}$ collection formed by inclusions $\nu_{m,n}^{\gamma C}$ with $C$ running through the collection of strict double categories, then $\nu_{m,n}$ is a natural transformation from the composition $H_m\gamma$ of the underlying functor of the globularily generated piece 2-functor $\gamma$ and the $m$-th horizontal functor, to the composition $H_n\gamma$ of the globularily generated piece functor $\gamma$ and the $n$-th horizontal category functor $H_n$. The sequence formed by functors $H_n$ together with the collection of natural transformations $\nu_{m,n}$ forms a diagram in the underlying category of \textbf{Cat}, with base in the underlying category of $\overline{\textbf{dCat}}$. 
\

\noindent We will denote the composition $\tau\gamma$ of the underlying functor of the globularily generated piece 2-functor $\gamma$ and the transversal functor $\tau$ by $\gamma^{\tau}$.  
The following proposition says that the limit of the above diagram is functor $\gamma^{\tau}$. Its proof now follows directly from lemma 4.2.

\begin{prop}
Functor $\gamma^\tau$ defined above is the limit $\varinjlim  H_n\gamma$ formed by sequence of horizontal functors $ H_n\gamma$ and collection of natural transformations $\nu_{m,n}$.
\end{prop}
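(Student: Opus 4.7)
The plan is to mimic the argument suggested for Proposition 5.2, but invoking Corollary 4.2 in place of Lemma 4.1 and restricting throughout to the strict setting where horizontal categories are defined. The content of the proposition is genuinely pointwise: it says that the colimit of the directed system $(H_n\gamma,\nu_{m,n})$ in the functor category $[\overline{\mbox{\textbf{dCat}}},\mbox{\textbf{Cat}}]$ is $\gamma^\tau$, and colimits of directed diagrams of functors into \textbf{Cat} are computed pointwise. So the task reduces to (i) checking the identification on objects, (ii) checking compatibility on morphisms.

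First I would evaluate the candidate cocone on objects. Fix a strict double category $C$. Applying the globularily generated piece 2-functor gives the strict globularily generated double category $\gamma C$, and evaluating the diagram at $C$ yields the sequence of inclusions $\nu_{m,n}^{\gamma C}:H_m^{\gamma C}\hookrightarrow H_n^{\gamma C}$. Corollary 4.2, applied to $\gamma C$, states precisely that the transversal category $\tau(\gamma C)$ is the colimit $\varinjlim H_n^{\gamma C}$ in \textbf{Cat}. Since $\gamma^\tau C=\tau\gamma C$ by definition, this gives the required identification $\gamma^\tau C=\varinjlim_n(H_n\gamma)(C)$, with universal cocone the obvious inclusions of each $H_n^{\gamma C}$ in $\tau(\gamma C)$.

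Next, for a strict double functor $F:C\to D$, I would verify that the universal map between colimits induced by the commuting squares displayed immediately before the proposition is exactly $\tau(\gamma F)=\gamma^\tau F$. By Lemma 5.1 applied to $\gamma F$, the family $\{H_n^{\gamma F}\}$ is compatible with the inclusions $\nu_{m,n}^{\gamma C}$ and $\nu_{m,n}^{\gamma D}$, so it factors uniquely through the colimit cocones. On objects this induced functor sends a vertical morphism $\alpha$ of $\gamma C$ to $(F_0)_1\alpha$, and on morphisms it sends a 2-morphism $\Phi$ to $(F_1)_1\Phi$; but these are by construction the object and morphism functions of $\tau(\gamma F)$. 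By uniqueness in the universal property, the two functors coincide.

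The only point requiring care is that the colimit claim must be interpreted in the functor category, not just at a fixed object, but this requires no extra argument: directed colimits in \textbf{Cat} are computed in both components, so the pointwise identifications above automatically assemble into an isomorphism of functors $\gamma^\tau\cong\varinjlim H_n\gamma$ compatible with the cocone $\{H_n\gamma\to\gamma^\tau\}$. Consequently the main obstacle — if any — is purely notational: one must keep track that the diagram lives over $\overline{\mbox{\textbf{dCat}}}$ while the colimit is taken in \textbf{Cat}, exactly as in Proposition 5.2.
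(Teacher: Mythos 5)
Your argument is correct and is essentially the paper's own: the paper offers no written proof beyond the remark that the claim "follows directly from lemma 4.2" (i.e.\ the corollary identifying $\tau C$ with $\varinjlim H_n^C$ for a strict globularily generated $C$), and your writeup is a faithful expansion of exactly that — pointwise computation of the filtered colimit in $\mathbf{Cat}$, the object-level identification via that corollary applied to $\gamma C$, and Lemma 5.1 for compatibility of the morphism components. Nothing further is needed.
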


\section{Computations}

\noindent In this final section we present explicit computations of the globularily generated piece of double categories introduced in section 2. We begin by computating the globularily generated piece $\gamma$\textbf{Cob}$(n)$ of double category \textbf{Cob}$(n)$ of $n$-dimensional manifolds, diffeomorphisms, cobordisms, and equivariant diffeomorphisms, for every positive integer $n$

\

\noindent We will write a horizontal equivariant endomorphism $(f,\Phi,f)$ in double category \textbf{Cob}$(n)$ simply as $(f,\Phi)$. If an equivariant morphism in \textbf{Cob}$(n)$ is written in this way it will be assumed it is a horizontal endomorphism. We will say that cobordisms $M$ and $N$ from a closed manifold $X$ to itself are \textbf{globularily diffeomorphic} if $M$ and $N$ are diffeomorphic relative to $X$. 

\

\noindent In order to explicitly compute globularily generated piece $\gamma$\textbf{Cob}$(n)$ of double category \textbf{Cob}$(n)$, by proposition 4.4 we need only to compute collection of non-globular, globularily generated 2-morphisms between horizontal endomorphisms in \textbf{Cob}$(n)$. We begin with the following lemma.

\begin{lem}
Let $n$ be a positive integer. Let $X$ and $Y$ be closed $n$-dimensional manifolds. Let $M$ be a cobordism from $X$ to $X$ and let $N$ be a cobordism from $Y$ to $Y$. If there exist non-globular globularily generated diffeomorphisms from $M$ to $N$ then $M$ and $N$ are globularily diffeomorphic to idenitity cobordisms $i_X$ and $i_Y$ respectively.
\end{lem}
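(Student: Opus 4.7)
The plan is to induct on the vertical length of the non-globular globularily generated equivariant diffeomorphism $\Phi:M\to N$.

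For the base case (vertical length $1$), I apply Lemma 4.6 to decompose $\Phi$ as $\Psi_k\circ\Phi_k\circ\cdots\circ\Psi_1\circ\Phi_1\circ\Psi_0$, where each $\Phi_i$ is the horizontal identity of some vertical morphism and each $\Psi_i$ is globular; non-globularity of $\Phi$ forces $k\geq 1$. Writing $\Phi_1=i_{f_1}$ for a diffeomorphism $f_1:A_1\to B_1$, its source is the identity cobordism $i_{A_1}$, and $\Psi_0$ is then a globular $2$-morphism from $M$ to $i_{A_1}$. Matching of source and target objects (in the sense of globularity) forces $A_1=X$, and since every $2$-morphism in $\mathbf{Cob}(n)$ is invertible, $\Psi_0$ witnesses that $M$ is globularily diffeomorphic to $i_X$. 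The symmetric argument with $\Psi_k$ and $\Phi_k$ yields $N$ globularily diffeomorphic to $i_Y$.

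For the inductive step, assume the claim for vertical lengths strictly less than $n$ and let $\Phi$ have vertical length $n>1$. I first reduce to the case $\Phi\in H^C_n$: writing $\Phi=\Psi_m\circ\cdots\circ\Psi_1$ with each $\Psi_j\in H^C_n$, at least one $\Psi_j$ must be non-globular (vertical composition of globulars being globular). Letting $j_0$ and $j_1$ denote the first and last such indices, the compositions $\Psi_{j_0-1}\circ\cdots\circ\Psi_1$ and $\Psi_m\circ\cdots\circ\Psi_{j_1+1}$ are globular, so $M$ and $N$ are globularily diffeomorphic to the source $M^{(j_0)}$ of $\Psi_{j_0}$ and the target $N^{(j_1)}$ of $\Psi_{j_1}$ respectively. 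It thus suffices to establish the claim for non-globular $\Psi\in H^C_n$. For such a $\Psi$, I write $\Psi\equiv\Phi_k\ast\cdots\ast\Phi_1$ with $\Phi_i\in V^C_{n-1}$. By Proposition 4.4 the common vertical morphism $f$ of $\Psi$ is not an identity. The composability $g_i=f_{i+1}$ combined with the dichotomy of Proposition 4.4 (globular or horizontal endomorphism) propagates inductively in $i$ from $f_1=f$: no $\Phi_i$ can be globular (else $f_i$ would be an identity), so each $\Phi_i$ is a non-globular horizontal endomorphism with $f_i=g_i=f$. Applying the inductive hypothesis to each $\Phi_i$ (of vertical length at most $n-1$) gives that its top cobordism $M_i$ is globularily diffeomorphic to $i_{X_i}$, where all $X_i$ are forced to agree with the source object $X$ of $\Psi$. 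Consequently $M^{(j_0)}=M_1\cup_X\cdots\cup_X M_k$ is globularily diffeomorphic to $i_X\cup_X\cdots\cup_X i_X$, and then to $i_X$ via a reparametrization of $[0,k]$ onto $[0,1]$ chosen to be the identity near the endpoints; the analogous argument on the target side handles $N$.

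The main obstacle will be the propagation argument in the last step: confirming that a single non-identity vertical morphism at one end of the horizontal decomposition forces every factor in $\Phi_k\ast\cdots\ast\Phi_1$ to be a non-globular horizontal endomorphism sharing the same $f$. The remainder of the argument amounts to careful bookkeeping of source/target constraints in tandem with the identity-cobordism characterization supplied by the inductive hypothesis; the final passage from a chain of identity cobordisms to a single identity cobordism by reparametrization is routine but essential in order to close the induction.
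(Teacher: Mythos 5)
Your proposal is correct and follows essentially the same route as the paper: induction on vertical length, with the base case handled via Lemma 4.6 and the inductive step via a horizontal decomposition into factors of $V^{\mbox{\scriptsize\textbf{Cob}}(n)}_{n-1}$, all of which are forced to be non-globular horizontal endomorphisms (the paper gets this from Corollary 4.5 rather than your propagation via Proposition 4.4, but the content is the same), followed by the inductive hypothesis on each factor and gluing of the resulting globular diffeomorphisms. The only cosmetic difference is that you dispose of general elements of $V^{\mbox{\scriptsize\textbf{Cob}}(n)}_n$ up front by isolating the first and last non-globular factors of the vertical decomposition, whereas the paper runs a second nested induction on the length of that decomposition at the end.
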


\begin{proof}
Let $n$ be a positive integer. Let $X$ and $Y$ be closed $n$-dimensional manifolds. Let $M$ be a cobordism from $X$ to $X$ and let $N$ be a cobordism from $Y$ to $Y$. Suppose there exists a non-globular globularily generated diffeomorphism from $M$ to $N$. In that case we wish to prove that $M$ and $N$ are globularily diffeomorphic to identity cobordisms $i_X$ and $i_Y$ respectively.

Let $(f,\Phi):M\to N$ be a non-globular globularily generated diffeomorphism from $M$ to $N$. We proceed by induction on the vertical length of $(f,\Phi)$ to prove that the existence of $(f,\Phi)$ implies that $M$ and $N$ are globularily diffeomorphic to horizontal identities $i_X$ and $i_Y$ respectively. Suppose first that the vertical length of $(f,\Phi)$ is equal to 1. By lemma 4.6 there exists a decomposition of $(f,\Phi)$ as a vertical composition of the form

\[(id_{X_k},\Psi_k)\circ(f_k,f_k\times id_{[0,1]})\circ...\circ(id_{X_1},\Psi_1)\circ(f_1,f_1\times id_{[0,1]})\circ(id_{X_0},\Psi_0)\]

\noindent where $X_0,...,X_k$ are $n$-dimensional manifolds, $X_0$ and $X_k$ are equal to $X$ and $Y$ respectively, $f_i:X_i\to X_{i+1}$ is a diffeomorphism from $X_i$ to $X_{i+1}$ for all $i\leq k-1$, and where $\Psi_i$ is a globular diffeomoerphism from $X_i$ to $X_i$ for all $i$. Since we assume that $(f,\Phi)$ is not globular then the length $k$ of this decomposition is greater than or equal to 1. The domain of the horizontal identity of $\Phi_1$ is equal to the horizontal identity $i_X$ of manifold $X$ and the codomain of the horizontal identity $\Phi_k$ is equal to the horizontal identity $i_Y$ of manifold $Y$. Thus $\Psi_0$ and $\Psi_k$ define globular diffeomorphisms between $M$ and $N$ and horizontal identities $i_X$ and $i_Y$ respectively.

Let $m$ be a positive integer strictly greater than 1. Assume now that the result is true for every pair of cobordisms admitting a non-globular globularily generated diffeomorphism of vertical length strictly less than $m$. Assume first that non-globular globularily generated diffeomorphism $(f,\Phi)$ is an element of $H^{\mbox{\textbf{Cob}}(n)}_m$. Write, in this case $(f,\Phi)$ as a horizontal composition

\[(f,\Phi)\equiv(f,\Phi_k)\ast...\ast(f,\Phi_1)\]

\noindent where $(f,\Phi_i)$ is a morphism in the $m-1$-th vertical category $V^{\mbox{\textbf{Cob}}(n)}_{m-1}$ associated to \textbf{Cob}$(n)$ for every $i\leq k$. Moreover, assume that the length $k$ of this decomposition is minimal. We proceed by induction on $k$ to prove that in this case the existence of $(f,\Phi)$ implies that $M$ and $N$ satisfy the conditions of the lemma. If $k=1$ then $(f,\Phi)$ is an element of the $m-1$-th vertical category $V^{\mbox{\textbf{Cob}}(n)}_{m-1}$ associated to \textbf{Cob}$(n)$ and by induction hypothesis its existence implies that $M$ and $N$ satisfy the conditions of the lemma. Suppose now that $k$ is strictly greater than 1. Write $(f,\Psi)$ for any representative of $(f,\Phi_k)\ast...\ast(f,\Phi_2)$. In that case the horizontal composition $(f,\Psi)\ast (f,\Phi_1)$ is equivalent to $(f,\Phi)$. From the assumption that $(f,\Phi)$ is not globular and from corollary 4.5 it follows that non of its conjugate morphisms is globular. Thus the horizontal composition $(f,\Psi)\ast(f,\Phi_1)$ is not globular and thus, again by corollary 3.6 neither $(f,\Psi)$ nor $(f,\Phi_1)$ is globular. Both $(f,\Psi)$ and $(f,\Phi_1)$ are globularily equivalent to the horizontal composition of strictly less than $k$ morphisms in the $m-1$-th vertical category $V^{\mbox{\textbf{Cob}}(n)}_{m-1}$ associated to \textbf{Cob}$(n)$. Let $M_1$ and $N_1$ be the domain and codomain of $(f,\Phi_1)$ and let $M_2$ and $N_2$ be the domain and codomain of $(f,\Psi_1)$. By induction hypothesis $M_1$ and $M_2$ are both globularily diffeomorphic to the horizontal identity $i_X$ of $X$ and both $N_1$ and $N_2$ are globularily diffeomorphic to the horizontal identity $i_Y$ of $Y$. It follows that $M_2\ast M_1$ is globularily diffeomorphic to the horizontal identity $i_X$ of $X$ and that $N_2\ast N_1$ is globularily diffeomorphic to the horizontal identity $i_Y$ of $Y$. Finally, by the exchange property in \textbf{Cob}$(n)$ we conclude that $M$ and $N$ are globularily diffeomorphic to horizontal identities $i_X$ and $i_Y$ of $X$ and $Y$ respectively.

Suppose now that $(f,\Phi)$ is a general element of the $m$-th vertical category $V^{\mbox{\textbf{Cob}}(n)}_m$ associated to \textbf{Cob}$(n)$. In that case write $(f,\Phi)$ as a vertical composition

\[(f,\Phi)=(f_k,\Phi_k)\circ...\circ(f_1,\Phi_1)\]

\noindent where $(f_i,\Phi_i)$ is an element of $H_m^{\mbox{\textbf{Cob}}(n)}$ for every $i$. Moreover, assume that the length $k$ of this decomposition is minimal. We again proceed by induction on $k$. If $k=1$ then $(f,\Phi)$ is an element of $H^{\mbox{\textbf{Cob}}(n)}_m$. Suppose now that $k$ is strictly greater than 1 and that the existence of a non-globular globularily generated diffeomorphism in the $m$-th vertical category $V^{\mbox{\textbf{Cob}}(n)}_m$ associated to \textbf{Cob}$(n)$, between manifolds $X$ and $Y$, that can be written as a vertical composition of strictly less than $k$ diffeomorphisms in $H^{\mbox{\textbf{Cob}}(n)}_m$ implies the conclusion of the lemma for $X$ and $Y$. Write $(g,\Psi)$ for composition $(f_k,\Phi_k)\circ...\circ(f_2,\Phi_2)$. In that case $(f,\Phi)$ is equal to vertical composition $(g,\Psi)\circ(f_1,\Phi_1)$. Moreover, from the assumption that $(f,\Phi)$ is not globular it follows that one of $(g,\Psi)$ or $(f_1,\Phi_1)$ is non-globular. Assume first that $(g,\Psi)$ is globular. In that case source and taget of $(f_1,\Phi_1)$ are both equal to $f$. By induction hypothesis the domain and codomain of $(f,\Phi_1)$ are globularily diffeomorphic to the horizontal identity $i_X$ of $X$ and the horizontal identity $i_Y$ of $Y$ respectively. The domain of $(f,\Phi)$ is equal to the codomain of $(f,\Phi_1)$ and $(g,\Psi)$ defines a globular diffeomorphism between the codomain of $(f,\Phi)$ and the codomain of $(f_1,\Phi_1)$. We conclude that in this case, the existence of non-globular globularily generated diffeomorphism $(f,\Phi)$ implies the existence of a globular diffeomorphism between $M$ and the horizontal identity $i_X$ of $X$ and between $N$ and the horizontal identity $i_Y$ of $Y$. The case in which it is assumed that $(f_1,\Phi_1)$ is globular is handled analogously. Suppose now that neither $(g,\Psi)$ nor $(f_1,\Phi_1)$ are globular. In that case, the induction hypothesis implies that there exists a globular diffeomorphism between $M$, which is the domain of $(f_1,\Phi_1)$, and the horizontal identity $i_X$ of $X$ and that there exists a globular diffeomorphism between $N$, which is the codomain of $(g,\Psi)$, and the horizontal identity $i_Y$ of $Y$. This concludes the proof.  
\end{proof}

\noindent As a consequence of lemma 6.1, in order to compute the globularily generated piece $\gamma$\textbf{Cob}$(n)$ of double category \textbf{Cob}$(n)$ it is enough to compute the collection of non-globular globularily generated diffeomorphisms between horizontal endomorphisms globularily diffeomorphic to horizontal identities of closed $n$-dimensional manifolds. This is achieved in the following proposition.

\begin{prop}
Let $n$ be a positive integer. Let $X$ and $Y$ be closed $n$-dimensional manifolds. Let $M$ be a cobordism from $X$ to $X$ and let $N$ be a cobordism from $Y$ to $Y$. Suppose that $N$ is globularily diffeomorphic to the identity cobordism $i_X$ associated to $X$ and that $N$ is globularily diffeomorphic to the identity cobordism $i_Y$ associated to $Y$. In that case every horizontal endomorphism from $M$ to $N$, in double category \textbf{Cob}$(n)$, is globularily generated and has vertical length equal to 1.
\end{prop}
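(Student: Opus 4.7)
The strategy is to use the hypothesized globular diffeomorphisms $M\cong i_X$ and $N\cong i_Y$ to reduce to the model case of a horizontal endomorphism between identity cobordisms, where an explicit decomposition as (globular)$\circ i_f$ is available. Proposition 4.4 already tells us that the only globularily generated 2-morphisms with non-globular source/target diffeomorphism are horizontal endomorphisms, and lemma 4.6 tells us what a vertical-length-$1$ decomposition must look like; the current statement is precisely the converse at the level of cobordisms.

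The model case I would handle first: given a horizontal endomorphism $(f,\Phi',f)\colon i_X\to i_Y$, set $\Xi:=\Phi'\circ(f^{-1}\times\mathrm{id}_{[0,1]})\colon Y\times[0,1]\to Y\times[0,1]$. A direct check on each boundary component $Y\times\{0\}$ and $Y\times\{1\}$ shows that $\Xi$ restricts to $\mathrm{id}_Y$, so $(\mathrm{id}_Y,\Xi,\mathrm{id}_Y)$ is a globular self-morphism of $i_Y$. Since $\Xi\circ(f\times\mathrm{id}_{[0,1]})=\Phi'$ by construction, the vertical composition of $(\mathrm{id}_Y,\Xi,\mathrm{id}_Y)$ with the horizontal identity $i_f=(f,f\times\mathrm{id}_{[0,1]},f)$ recovers $(f,\Phi',f)$. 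This presents the endomorphism as a vertical composition of two elements of $H^{\mathbf{Cob}(n)}_1$, so it lies in $V^{\mathbf{Cob}(n)}_1$.

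For the general statement, I would lift the assumed globular diffeomorphisms $M\cong i_X$ and $N\cong i_Y$ (diffeomorphisms relative to the boundaries) to globular 2-morphisms $\Theta_1\colon M\to i_X$ and $\Theta_2\colon N\to i_Y$ in $\mathbf{Cob}(n)$, and form the conjugate $\Theta_2\circ(f,\Phi,f)\circ\Theta_1^{-1}\colon i_X\to i_Y$, which is again a horizontal endomorphism. Applying the model case gives a decomposition (globular)$\circ i_f$ of this conjugate; undoing the conjugation then exhibits
\[
(f,\Phi,f)=\Theta_2^{-1}\circ(\mathrm{id}_Y,\Xi,\mathrm{id}_Y)\circ i_f\circ\Theta_1,
\]
a vertical composition of three elements of $H^{\mathbf{Cob}(n)}_1$ (the two outer factors being compositions of globular 2-morphisms, and $i_f$ a horizontal identity of a vertical morphism). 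This places $(f,\Phi,f)$ in $V^{\mathbf{Cob}(n)}_1$, so it is globularily generated of vertical length at most $1$; since the vertical filtration begins at $V^{\mathbf{Cob}(n)}_1$, the vertical length is exactly $1$.

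The argument is almost entirely bookkeeping. The only step requiring any content is the explicit choice $\Xi=\Phi'\circ(f^{-1}\times\mathrm{id})$ in the model case, which I expect to be the main (but mild) obstacle in the sense that one must guess the right globular piece; once $\Xi$ is identified, everything reduces to checking boundary restrictions and matching sources and targets so the vertical compositions are legitimate in $\mathbf{Cob}(n)$.
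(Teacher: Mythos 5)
Your proposal is correct and follows essentially the same route as the paper: an explicit factorization of a horizontal endomorphism $i_X\to i_Y$ into a globular piece composed with the horizontal identity $i_f$ (the paper puts the globular factor on the source side, you put it on the target side, which is an immaterial difference), followed by conjugation by the assumed globular diffeomorphisms $M\cong i_X$, $N\cong i_Y$ to reduce the general case to the model case.
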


\begin{proof}
Let $n$ be a positive integer. Let $X$ and $Y$ be closed $n$-dimensional manifolds. Let $M$ be a cobordism from $X$ to $X$, globularily diffeomorphic to the horizontal identity $i_X$ associated to manifold $X$ and let $N$ be a cobordism from $Y$ to $Y$, globularily diffeomorphic to the horizontal identity $i_Y$ associated to $Y$. We wish to prove, in this case, that every 2-morphism, in double category \textbf{Cob}$(n)$, from $M$ to $N$, is globularily generated and has vertical length equal to 1.

We first prove the proposition for the case in which $M$ and $N$ are equal to the horizontal identity cobordisms $i_X$ and $i_Y$ respectively. Let $(f,\Phi):i_X\to i_Y$ be a 2-morphism, in \textbf{Cob}$(n)$, from $i_X$ to $i_Y$. In that case the equivariant morphism $(id_X,(f^{-1}\times id_{[0,1]})\Phi)$ is a globular endomorphism of the horizontal identity $i_X$ of $X$ making the following triangle

\begin{center}

\begin{tikzpicture}
\matrix(m)[matrix of math nodes, row sep=3.6em, column sep=6em,text height=1.5ex, text depth=0.25ex]
{i_X&i_Y\\
i_X\\};
\path[->,font=\scriptsize,>=angle 90]
(m-1-1) edge node[auto] {$(f,\Phi)$} (m-1-2)
        edge node[left] {$(id_X,(f^{-1}\times id_{[0,1]})\Phi)$} (m-2-1)
(m-2-1) edge node[right] {$(f,f\times id_{[0,1]})$} (m-1-2);
\end{tikzpicture}

\end{center}

\noindent commute. Since $(id_X,(f^{-1}\times id_{[0,1]})\Phi)$ is globular and $(f,f\times id_{[0,1]})$ is the horizontal identity $i_f$ of diffeomorphism $f$ of $X$, we conclude that $(f,\Phi)$ is globularily generated and that its vertical length is equal to 1.

Suppose now that $M$ is a general cobordism from $X$ to $X$ globularily diffeomorphic to the horizontal identity $i_X$ of $X$ and that $N$ is a general cobordism from $Y$ to $Y$, globularily diffeomorphic to the horizontal identity $i_Y$ of $Y$. Let $(f,\Phi):M\to N$ be a general 2-morphism, in \textbf{Cob}$(n)$, from $M$ to $N$. Let $(id_X,\varphi):M\to i_X$ be a globular diffemorphism from $M$ to the horizontal identity $i_X$ of $X$ and let $(id_Y,\phi):N\to i_Y$ be a globular diffeomorphism from $N$ to the horizontal identity $i_Y$ of $Y$. In that case composition $(f,\Psi)=(id_Y,\phi)(f,\Phi)(id_X,\varphi^{-1})$ is a 2-morphism from the horizontal identity $i_X$ of $X$ to the horizontal identity $i_Y$ of $Y$ and is thus a morphism in the first vertical category $V^{\mbox{\textbf{Cob}}(n)}_1$ associated to \textbf{Cob}$(n)$. We conclude that $(f,\Phi)=(id_Y,\phi^{-1})(f,\Psi)(id_X,\varphi)$ is also a morphism in the first vertical category $V^{\mbox{\textbf{Cob}}(n)}_1$ associated to \textbf{Cob}$(n)$. This concludes the proof.
\end{proof}

\noindent By lemma 4.4, proposition 6.2 provides an explicit description of the globularily generated piece of double categories of the form \textbf{Cob}$(n)$. We now compute, using a procedure analogous to the one used to compute the globularily generated piece $\gamma$\textbf{Cob}$(n)$ of double categories of the form \textbf{Cob}$(n)$, the globularily generated piece $\gamma$\textbf{Alg} of double category \textbf{Alg} of complex algebras, unital algebra morphisms, bimodules, and equivariant bimodule morphisms. We make the same considerations regarding horizontal equivariant endomorphisms in \textbf{Alg} as we did with horizontal equivariant endomorphisms in \textbf{Cob}$(n)$. 

\

\noindent Given algebras $A$ and $B$, a left-right $A$-bimodule $M$ and a left-right $B$-bimodule $N$, we say that an equivariant morphism $(f,\varphi):M\to N$ from $M$ to $N$, is \textbf{2-subcyclic} if there exists a cyclic $A$-submodule $L$ of $N$, considering $N$ as an $A$-bimodule via $f$, and a cyclic $B$-submodule $K$ of $N$ such that inclusions Im$\varphi\subseteq L\subseteq K$ hold. Horizontal identities of algebra morphisms are examples of 2-subcyclic equivariant morphisms. Pair $(i,i^2)$ formed by inclusion of $\mathbb{Z}$ in $\mathbb{Q}$ and inclusion of $\mathbb{Z}^2$ in $\mathbb{Q}^2$ is an example of an non-2-subcyclic equivariant morphism from $\mathbb{Z}$-bimodule $\mathbb{Z}^2$ to $\mathbb{Q}$-bimodule $\mathbb{Q}^2$. 

\

\noindent In order to explicitly compute the globularily generated piece $\gamma$\textbf{Alg} of double category \textbf{Alg}, by proposition 4.4, we again need only to compute collection of non-globular, globularily generated 2-morphisms between horizontal endomorphisms in \textbf{Alg}. We begin with the following lemma.

\

\begin{lem}
Let $A$ and $B$ be algebras. Let $M$ and $M'$ be left-right $A$-bimodules and let $N$ and $N'$ be left-right $B$-bimodules. Let $(f,\varphi):M\to N$ be an equivariant morphism from $M$ to $N$ and let $(f,\varphi'):M'\to N'$ be equivariant morphism from $M'$ to $N'$. If both $(f,\varphi)$ and $(f,\varphi')$ are 2-subcyclic then relative tensor product $(f,\varphi\otimes_f\varphi')$ is 2-subcyclic.
\end{lem}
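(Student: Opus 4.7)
The plan is to exhibit explicit cyclic generators for candidate witness sub-bimodules in $N\otimes_B N'$, built from the generators provided by the 2-subcyclicity hypotheses on $(f,\varphi)$ and $(f,\varphi')$. Concretely, fix $n_0\in N$ with $L = An_0A$ (with $A$ acting on $N$ via $f$) and $k_0\in N$ with $K = Bk_0B$ such that $\mathrm{Im}\,\varphi\subseteq L\subseteq K$; similarly fix $n_0'\in N'$ and $k_0'\in N'$ with $L' = An_0'A$, $K' = Bk_0'B$ and $\mathrm{Im}\,\varphi'\subseteq L'\subseteq K'$. Set
\[\tilde n \;=\; n_0\otimes_B n_0'\in N\otimes_B N',\qquad \tilde k \;=\; k_0\otimes_B k_0'\in N\otimes_B N'.\]
The plan is to verify the two inclusions
\[\mathrm{Im}(\varphi\otimes_f\varphi')\;\subseteq\; A\tilde n A\;\subseteq\; B\tilde k B,\]
which together exhibit $(f,\varphi\otimes_f\varphi')$ as 2-subcyclic.

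For the first inclusion, given $x\in M$ and $y\in M'$, write $\varphi(x) = \sum_i f(a_i)n_0 f(b_i)$ and $\varphi'(y) = \sum_j f(c_j)n_0' f(d_j)$. Then
\[\varphi(x)\otimes_B\varphi'(y) \;=\; \sum_{i,j} f(a_i)n_0 f(b_i)\otimes f(c_j)n_0' f(d_j),\]
and the defining relation of the relative tensor product $\otimes_B$ lets me pass each $f(b_i)\in B$ across the tensor. The key step is then to absorb the resulting middle factor $f(b_ic_j)$ into $n_0$ or $n_0'$ by using the closure of $L$ and $L'$ under the $A$-bimodule action via $f$, and to repeatedly apply $\otimes_B$-moves to bring each summand into the form $f(\alpha)n_0\otimes n_0' f(\beta)$, which lies in $A\tilde n A$. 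For the second inclusion, the containments $L\subseteq K$ and $L'\subseteq K'$ provide expressions $n_0 = \sum_s \beta_s k_0\beta_s'$ and $n_0' = \sum_t \gamma_t k_0'\gamma_t'$ in $B$-bimodule form; substituting into $\tilde n$, using $\otimes_B$ and the inclusion $f(A)\subseteq B$, I rewrite any $A$-bimodule translate of $\tilde n$ as a $B$-bimodule translate of $\tilde k$.

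The main obstacle will be the first inclusion. The direct expansion of $\varphi(x)\otimes_B\varphi'(y)$ produces middle factors of the form $f(b_ic_j)$ which are not directly absorbed by the $A$-action on the outer factors of $\tilde n = n_0\otimes_B n_0'$. Overcoming this requires systematically converting these middle factors, via the $\otimes_B$-relation, into right or left actions on $n_0$ or $n_0'$ that return to elements of $L$ or $L'$, and then using the cyclic $A$-bimodule structure of $L$ and $L'$ to repackage the whole expression as an $A$-bimodule translate of $\tilde n$. This bookkeeping is where the content of the lemma lies; once it is handled, the second inclusion is a straightforward translation of the containments $L\subseteq K$, $L'\subseteq K'$ into the tensor product.
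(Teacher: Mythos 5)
Your choice of witnesses is the same as the paper's: the paper takes $L\otimes_A L'$ and $K\otimes_B K'$ as the sub-bimodules exhibiting 2-subcyclicity of $(f,\varphi\otimes_f\varphi')$, and your $A\tilde n A$ and $B\tilde k B$ are exactly these once one grants that $L\otimes_A L'$ is $A$-cyclic with generator $n_0\otimes n_0'$ and $K\otimes_B K'$ is $B$-cyclic with generator $k_0\otimes k_0'$. The difference is that the paper simply asserts these cyclicity statements, whereas you attempt to verify the resulting inclusions by hand --- and the verification you sketch does not close. The step you defer as ``bookkeeping'' is circular as proposed: a summand $f(a_i)n_0f(b_i)\otimes f(c_j)n_0'f(d_j)$ becomes $f(a_i)\bigl(n_0\otimes f(b_ic_j)n_0'\bigr)f(d_j)$, and to absorb the middle factor you move it back across the tensor, $n_0\otimes f(b_ic_j)n_0'=n_0f(b_ic_j)\otimes n_0'$, re-expand $n_0f(b_ic_j)=\sum_s f(\alpha_s)n_0f(\beta_s)$ using cyclicity of $L$, and are left with new middle factors $f(\beta_s)$ of exactly the same shape. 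No finite iteration of these $\otimes_B$-moves produces the form $f(\alpha)n_0\otimes n_0'f(\beta)$; what is actually needed is that $n_0\otimes f(e)n_0'$ lies in $A(n_0\otimes n_0')A$ for every $e\in A$, and that is precisely the cyclicity claim itself, not something the balancing relations hand you. (Compare the free case $L=L'=A\otimes_k A$ with generator $1\otimes 1$: there $L\otimes_A L'\cong A\otimes_k A\otimes_k A$ is generated as a bimodule by the elements $1\otimes e\otimes 1$, not by the single element $1\otimes 1\otimes 1$.)

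The same difficulty infects the second inclusion, which you call straightforward: substituting $n_0=\sum_s\beta_sk_0\beta_s'$ and $n_0'=\sum_t\gamma_tk_0'\gamma_t'$ into $\tilde n$ yields summands $\beta_s\bigl(k_0\otimes\beta_s'\gamma_tk_0'\bigr)\gamma_t'$ with an unabsorbed middle factor $\beta_s'\gamma_t\in B$, so landing in $B\tilde k B$ requires the same kind of absorption. In short, your proposal correctly isolates the crux of the lemma --- the very point the paper passes over when it declares $L\otimes_A L'$ and $K\otimes_B K'$ cyclic --- but it supplies no argument for it, and the mechanism you describe for supplying one would not terminate. As written the proof is incomplete at exactly the step you yourself identify as carrying the content.
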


\begin{proof}
Let $A$ and $B$ be algebras. Let $M$ and $M'$ be left-right $A$-bimodules and let $N$ and $N'$ be left-right $B$-bimodules. Let $(f,\varphi):M\to N$ and $(f,\varphi'):M'\to N'$ be quivariant morphisms from $M$ to $N$ and from $M'$ to $N'$ respectively. Suppose both $(f,\varphi)$ and $(f,\varphi')$ are 2-subcyclic. We wish to prove in this case that the relative tensor product $(f,\varphi\otimes_f\varphi')$ is 2-subcyclic.

Let $L,L'$ and $K,K'$ be bimodules such that $L$ and $L'$ are $A$-cyclic submodules of $N$ and $N'$ respectively and such that $K$ and $K'$ are $B$-cyclic submodules of $N$ and $N'$ respectively. Moreover, let $L,L'$ and $K,K'$ satisfy inclusions Im$\varphi\subseteq L\subseteq K$ and Im$\varphi'\subseteq L'\subseteq K'$ respectively. The relative tensor product $L\otimes_AL'$ is an $A$-cyclic submodule of $N\otimes_AN'$, the relative tensor product $K\otimes_BK'$ is a $B$-cyclic submodule of $N\otimes_B N'$, $L\otimes_AL'$ is contained in $K\otimes_B' K'$, and finally Im$\varphi\otimes_f\varphi'$ is contained in $L\otimes_AL'$. This concludes the proof.
\end{proof}

\begin{prop}
Let $A$ and $B$ be algebras. Let $M$ be left-right $A$-bimodule and let $N$ be a left-right $B$-bimodule. In that case collection of non-globular globularily generated equivariant morphisms from $M$ to $N$ is precisely the collection of non-globular 2-subcyclic equivariant morphisms from $M$ to $N$. Moreover, every globularily generated equivariant morphism from $M$ to $N$ has vertical length equal to 1.
\end{prop}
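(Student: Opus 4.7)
The plan follows the two-step structure of the cobordism computation (Lemma 6.1 and Proposition 6.2), with Lemma 6.3 playing the role of the exchange property and Proposition 4.4 ensuring throughout that non-globular 2-morphisms are horizontal endomorphisms over a common algebra morphism $f:A\to B$. I will split the proof into the two implications.

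For the forward direction (non-globular globularily generated implies 2-subcyclic) I will induct on vertical length. In the base case, Lemma 4.6 produces a canonical decomposition $\varphi=\Psi_k\circ i_{f_k}\circ\Psi_{k-1}\circ\cdots\circ i_{f_1}\circ\Psi_0$ where each intermediate $\Psi_i$ is a bimodule endomorphism of the identity bimodule $A_{i+1}$ and therefore acts by multiplication by a central element $c_i\in Z(A_{i+1})$, the outermost $\Psi_k:B\to N$ is multiplication by some $n_0\in N^B$, and $\Psi_0:M\to A$ is an $A$-bimodule map. A direct computation, using that each pushed-forward central element $(f_k\circ\cdots\circ f_{i+1})(c_i)$ commutes with $f(A)=(f_k\circ\cdots\circ f_1)(A)$, yields $\varphi(m)=n_0\cdot\tilde b\cdot f(\Psi_0(m))$ for some $\tilde b\in B$ commuting with $f(A)$; the sub-bimodules $L=f(A)(n_0\tilde b)f(A)$ and $K=Bn_0B$ then witness 2-subcyclicity. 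The inductive step writes $\varphi\in V_n$ as a vertical composite of elements of $H_n$, uses Proposition 4.4 to force non-globular factors to share a common $f$, applies the inductive hypothesis to make them 2-subcyclic, and concludes via Lemma 6.3 for horizontal composites and an analogous direct bimodule argument for vertical composites that involve globular factors.

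For the backward direction (non-globular 2-subcyclic implies globularily generated with vertical length $1$) I will reverse the construction. Given $\operatorname{Im}\varphi\subseteq L=f(A)yf(A)\subseteq K=BxB$, I choose an intermediate algebra $A_2$ --- conveniently a polynomial extension $A\otimes\mathbb{C}[t]$ --- together with unital algebra morphisms $f_1:A\to A_2$ and $f_2:A_2\to B$ satisfying $f_2\circ f_1=f$ and sending the central element $1\otimes t$ to a prescribed element $\tilde b\in B$ commuting with $f(A)$; paired with appropriate bimodule maps $\Psi_0:M\to A$ and $\Psi_2:B\to N$ sending $1$ to a suitable $n_0\in N^B$, the composition $\Psi_2\circ i_{f_2}\circ(1\otimes t\cdot-)\circ i_{f_1}\circ\Psi_0$ manifestly sits in $V_1$ and is arranged to recover $\varphi$. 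Combining the two implications gives the stated equality, and since globular morphisms already lie in $H_1\subseteq V_1$ by definition, every globularily generated morphism is forced to have vertical length $1$. I expect the hardest step to be this backward construction: inverting the forward identity $\varphi(m)=n_0\tilde b f(\Psi_0(m))$ to extract $n_0$, $\tilde b$ and $\Psi_0$ from the bare 2-subcyclicity hypothesis requires showing that the cyclic generators $y$ and $x$ can always be replaced by compatible witnesses lying in the correct commutants, which in general may force an enlargement of the intermediate algebra beyond the polynomial extension above.
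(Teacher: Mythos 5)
Your forward direction (globularily generated implies 2-subcyclic) follows the paper's route: induction on vertical length, with the base case handled by the alternating decomposition of Lemma 4.6 and the inductive step by Lemma 6.3 for horizontal composites; your reduction of the intermediate globular pieces to central elements, yielding $\varphi(m)=n_0\tilde b\,f(\Psi_0(m))$, is a more explicit version of the paper's choice of $K$ as the image of the final globular piece ${}_BB_B\to N$ and of $L$ as the image of the full composite out of ${}_AA_A$. Both treatments are equally terse about vertical composites mixing globular and non-globular factors at higher levels, so no objection there.

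The backward direction is where your proposal breaks. Your construction requires an $A$-bimodule map $\Psi_0:M\to{}_AA_A$ through which $\varphi$ factors, but 2-subcyclicity is a condition on the image of $\varphi$ inside $N$ and gives no control whatsoever over factorizations on the domain side; such a $\Psi_0$ need not exist. For instance, for $A=\mathbb{C}[x]$ and $M=\mathbb{C}[x]/(x)$ the only $A$-bimodule map $M\to A$ is zero, yet $M$ admits nonzero non-globular 2-subcyclic equivariant morphisms (e.g.\ the identity map to $N=\mathbb{C}$ over $f=\mathrm{ev}_0$), so no composite of the form $\Psi_2\circ i_{f_2}\circ(1\otimes t\cdot -)\circ i_{f_1}\circ\Psi_0$ can recover them. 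The difficulty you flag at the end --- extracting $n_0$, $\tilde b$ and $\Psi_0$ from bare 2-subcyclicity --- is therefore not a technical wrinkle to be absorbed into a larger intermediate algebra; it is the precise point of failure, and in effect you would be proving the (generally false) statement that every 2-subcyclic morphism factors through the regular bimodule. The paper avoids the issue by working entirely on the codomain side: it codomain-restricts $\varphi$ to $L$, which is a globular 2-morphism $M\to{}_AL_A$ available for free from $\operatorname{Im}\varphi\subseteq L$, post-composes with the globular inclusion of $K$ into $N$, and thereby reduces the whole claim to showing that the single equivariant inclusion $(f,j'):{}_AL_A\to{}_BK_B$ lies in the first vertical category --- the only place where cyclicity of $L$ and $K$ is used. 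You should restructure the backward direction along these codomain-side lines rather than attempting to manufacture data on the domain side.
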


\begin{proof}
Let $A$ and $B$ be algebras. Let $M$ be a left-right $A$-bimodule and let $N$ be a left-right $B$-bimodule. We wish to prove that collection of non-globular globularily generated equivariant morphisms from $M$ to $N$ is precisely the collection of non-globular 2-subcyclic equivariant morphisms from $M$ to $N$. Moreover, we wish to prove that every globularily generated equivariant morphism from $M$ to $N$ has vertical length equal to 1.

We first prove that every non-globular 2-subcyclic equivariant morphism from $M$ to $N$ is globularily generated. Let $(f,\varphi):M\to N$ be non-globular and 2-subcyclic. Let $K$ be a $B$-cyclic submodule of $N$ and let $L$ be an $A$-cyclic submodule of $K$ such that inclusions Im$\varphi\subseteq L\subseteq K$ hold. Let $j$ denote the inclusion of $K$ in $N$. Let $\overline{\varphi}$ denote the codomain restriction of $\varphi$ to $K$. Thus defined $j$ is a globular and equivariant morphism $(f,\overline{\varphi})$ makes the following triangle:

\begin{center}

\begin{tikzpicture}
\matrix(m)[matrix of math nodes, row sep=3.6em, column sep=6em,text height=1.5ex, text depth=0.25ex]
{M&N\\
K\\};
\path[->,font=\scriptsize,>=angle 90]
(m-1-1) edge node[auto] {$(f,\varphi)$} (m-1-2)
        edge node[left] {$(f,\overline{\varphi})$} (m-2-1)
(m-2-1) edge node[right] {$(id_B,j)$} (m-1-2);
\end{tikzpicture}

\end{center}

\noindent commute. Denote now by $j'$ the inclusion of $L$ in $K$ and denote by $\tilde{\varphi}$ the codomain restriction of $\varphi$ to $L$. Thus defined $j'$ is globular, and equivariant morphism $(f,\tilde{\varphi})$ makes the following triangle:

\begin{center}

\begin{tikzpicture}
\matrix(m)[matrix of math nodes, row sep=3.6em, column sep=6em,text height=1.5ex, text depth=0.25ex]
{M&K\\
L\\};
\path[->,font=\scriptsize,>=angle 90]
(m-1-1) edge node[auto] {$(f,\overline{\varphi})$} (m-1-2)
        edge node[left] {$(f,\tilde{\varphi})$} (m-2-1)
(m-2-1) edge node[right] {$(id_A,j')$} (m-1-2);
\end{tikzpicture}

\end{center}

\noindent commute. The following square:

\begin{center}

\begin{tikzpicture}
\matrix(m)[matrix of math nodes, row sep=3.6em, column sep=6em,text height=1.5ex, text depth=0.25ex]
{M&N\\
L&K\\};
\path[->,font=\scriptsize,>=angle 90]
(m-1-1) edge node[auto] {$(f,\varphi)$} (m-1-2)
        edge node[left] {$(f,\tilde{\varphi})$} (m-2-1)
(m-2-1) edge node[below] {$(id_A,j')$} (m-2-2)
(m-2-2) edge node[right] {$(id_B,j)$} (m-1-2);
\end{tikzpicture}

\end{center}

\noindent is thus commutative. Commutativity of this square is clearly equivalent to commutativity of square:

\begin{center}

\begin{tikzpicture}
\matrix(m)[matrix of math nodes, row sep=3.6em, column sep=6em,text height=1.5ex, text depth=0.25ex]
{M&N\\
L&K\\};
\path[->,font=\scriptsize,>=angle 90]
(m-1-1) edge node[auto] {$(f,\varphi)$} (m-1-2)
        edge node[left] {$(id_A,\tilde{\varphi})$} (m-2-1)
(m-2-1) edge node[below] {$(f,j')$} (m-2-2)
(m-2-2) edge node[right] {$(id_B,j)$} (m-1-2);
\end{tikzpicture}

\end{center}

\noindent Left and right hand sides of this last square are Globular. Finally, triangle:

\begin{center}

\begin{tikzpicture}
\matrix(m)[matrix of math nodes, row sep=3.6em, column sep=6em,text height=1.5ex, text depth=0.25ex]
{L&K\\
K\\};
\path[->,font=\scriptsize,>=angle 90]
(m-1-1) edge node[auto] {$(f,j')$} (m-1-2)
        edge node[left] {$(f,f)$} (m-2-1)
(m-2-1) edge node[right] {$(id_B,j')$} (m-1-2);
\end{tikzpicture}

\end{center}

\noindent commutes, which proves that equivariant morphism $(f,j')$ is a morphism in the first vertical category $V^{\mbox{\textbf{Alg}}}_1$ associated to \textbf{Alg}. We conclude that the 2-subcyclic equivariant morphism $(f,\varphi)$ is globularily generated and has vertical length equal to 1.

We now prove that every non-globular globularily generated equivariant morphism from $M$ to $N$ is 2-subcyclic. Let $(f,\varphi):M\to N$ be non-globular and globularily generated. Assume first that $(f,\varphi)$ is an element of $H^{\mbox{\textbf{Alg}}}_1$. From the assumption that $(f,\varphi)$ is non-globular it follows that $(f,\varphi)$ is the horizontal identity of an algebra morphism and thus is 2-subcyclic. Suppose now that $(f,\varphi)$ is a general morphism in the first vertical category $V^{\mbox{\textbf{Alg}}}_1$ associated to \textbf{Alg}. We wish to find, in this case, an $A$-cyclic submodule $L$ of $N$ and a $B$-cyclcic submodule $K$ of $N$ such that inclusions Im$\varphi\subseteq L\subseteq K$ hold. Write $(f,\varphi)$ as a vertical composition of the form:

\[(id_B,\psi_{k+1})\circ(f_k,\phi_k)\circ...\circ(f_1,\phi_1)\circ(id_A,\psi_1)\]

\noindent as in lemma 4.6 where $f_i$ is an algebra morphism for every $i\leq k$. Write $(f,\Phi)$ for composition $(f_k,\phi_k)\circ...\circ(f_1,\phi_1)$. Thus defined $(f,\Phi)$ is an equivariant morphism from left-right $A$-bimodule $_AA_A$ to left-right $B$-bimodule $_BB_B$. Now make $K$ to be equal to the image Im$\psi_1$ of $\psi_1$ and make $L$ to be equal to the image Im$\Phi\psi$ of composition $\Phi\psi$. Thus defined $K$ and $L$ satisfy the conditions required. We conclude that every equivariant morphism in the first vertical category $V^{\mbox{\textbf{Alg}}}_1$ associated to \textbf{Alg} is 2-subcyclic. From this and from lemma 6.3 it follows that every globularily generated equivariant morphism between $M$ and $N$ is 2-subcyclic. The fact that every non-globular globularily generated 2-morphism in \textbf{Alg} has vertical length equal to 1 follows from this and from the first part of the proof. This concludes the proof.   
\end{proof}

\noindent Proposition 6.4 provides an explicit description of the globularily generated piece $\gamma$\textbf{Alg} of double category \textbf{Alg}. A similar computation provides a complete description of the globularily generated piece $\gamma[W^*]^f$ of double cateogory $[W^*]^f$ of semisimple von Neumann algebras, finite algebra morphisms, bimodules, and equivariant bimodule morphisms. Examples of globularily generated double categories having 2-morphisms of vertical length strictly greater than 1 will be studied in subsequent papers. 

\section{Bibliography}

\

\noindent

\noindent [1] Michael Atiyah. New invariants of three and four dimensional manifolds, \textit{Proc. Symp. Pure Math. American Math. Soc. \textbf{48}: 285-299.}

\

\noindent [2] Michael Atiyah. Topological quantum field theories, \textit{Publications Mathematiques de l'IHES, \textbf{68}: 175-186.}

\

\noindent  [3] John Baez, James Dolan. Higher-dimensional algebra and topological quantum field theory. \textit{J. Math. Phys. \textbf{36} (1995).}

\

\noindent [4] Arthur Bartels, Christopher L. Douglas, Andre Henriques. Dualizability and index of subfactors \textit{Quantum topology \textbf{5}. 2014. 289-345.}

\

\noindent [5] Arthur Bartels, Christopher L. Douglas, Andre Henriques. Conformal nets I: Coordinate free nets. \textit{Int. Math. Res. Not. \textbf{13}(2015), 4975-5052.}

\

\noindent [6] Arthur Bartels, Christopher Douglas, Andre Henriques. Conformal nets II: Conformal blocks. \textit{Comm. Math. Phys. (2017), to appear.}

\

\noindent [7] Arthur Bartels, Christopher L. Douglas, Andre Henriques. Fusion of defects. \textit{Mem. Amer. Math. Soc. (2017), to appear.}

\

\noindent [8] Jean Benabou. Introduction to bicategories. \textit{LNM \textbf{47}, Springer 1967, 1-77.}

\

\noindent [9] Alain Connes. Noncommutative Geometry, \textit{Academic press 1994.}

\

\noindent [10] Charles Ehresmann. Categories structurees. \textit{Ann. Sci. Ecole Nor. Sup. (3), \textbf{80} 349-426, 1963.}

\

\noindent [11] Charles Ehresmann. Categories et structures: extraits. \textit{Seminaire Ehresmann. Topologie et geometrie differentielle \textbf{6} 1-3, 1964.}

\

\noindent [12] Uffe Haagerup. The standard form of von Neumann algebras, \textit{Math. Scand. \textbf{37} (1975), 271-283.}

\

\noindent [13] V.R.F. Jones. Index for subfactors, \textit{Invent. Math., \textbf{66}, 1983,pp. 1-25.}

\

\noindent [14] Max Kelly, Ross Street. Review of the elements of 2-categories. \textit{Category seminar, Proc. Sem. Sydney, 1972/1973, Lecture notes in math. \textbf{420} 75-103. Springer, Berlin, 1974.}

\

\noindent [15]  Hideki Kosaki. Extension of Jones' theory of index to arbitrary subfactors \textit{J. Func. Anal., \textbf{66}, 1986, pp. 123-140.}

\

\noindent [16] Hideki Kosaki, Roberto Longo. A remark on the minimal index of subfactors, \textit{J. of Func. Anal. \textbf{107}(2) 458-470.}

\

\noindent [17] N.P. Landsman. Bicategories of operator algebras and Poisson manifolds. \textit{Math. physics in math $\&$ physics (Siena, 2000), Fields Inst. Comm. \textbf{30}, 271-286, AMS 2001.}

\

\noindent [18] Tom Leinster. Higher Operads, Higher Categories. \textit{Cambridge University Press 2004.}

\

\noindent [19] Jacob Lurie. On the classification of topological field theories, \textit{Current developments in mathematics, 2008, 129-280, Int. Press, Sommerville, MA.}

\

\noindent [20] Saunders Mac Lane. Categories for the working mathematician, \textit{Volume 5, Graduate texts in mathematics. Springer, second edition, 1998.}

\

\noindent [21] Stephan Stolz, Peter Teichner. What is an elliptic object? \textit{Topology, Geometry and quantum field theory, London Math. Soc. LNS \textbf{308}, Cambridge Univ. Press 2004, 247-343.}

\

\noindent [22] Michael A. Schulman. Framed bicategories and monoidal fibrations, \textit{Theory and Applications of Categories \textbf{20}(2008), no. 18, 650-738.}

\

\noindent [23] Andreas Thom. A remark about the Connes fusion tensor product. \textsl{Theory Appl. Categ., \textbf{25}}

\end{document}